\title{Groupoidal 2-quasi-categories and homotopy 2-types}
\author{Victor Brittes}
\date{}
\newtheorem{theorem}{Theorem} [section]
\newtheorem{proposition}[theorem]{Proposition} 
\newtheorem{corollary}[theorem]{Corollary} 
\newtheorem{lemma}[theorem]{Lemma} %
\newtheorem{mainthm}{Theorem}
\theoremstyle{definition}
\newtheorem{parag}[theorem]{}
\newtheorem{definition}[theorem]{Definition}
\theoremstyle{remark}
\newtheorem{example}[theorem]{Example}
\newtheorem{remark}[theorem]{Remark}
\def\a{\mathcal{A}}
\def\b{\mathcal{B}}
\def\cc{\mathcal{C}}
\def\d{\mathcal{D}}
\def\g{\mathcal{G}}
\def\m{\mathcal{M}}
\def\n{\mathcal{N}}
\def\R{\mathbb{R}}
\def\L{\mathbb{L}}
\def\W{\mathsf{W}}
\def\Fib{\mathsf{Fib}}
\def\Cof{\mathsf{Cof}}
\DeclareMathOperator{\op}{op}
\DeclareMathOperator{\id}{id}
\DeclareMathOperator{\st}{st}
\DeclareMathOperator{\Hom}{Hom}
\DeclareMathOperator{\ho}{ho}
\DeclareMathOperator{\Ho}{Ho}
\DeclareMathOperator{\loc}{loc}
\DeclareMathOperator{\Set}{\mathbf{Set}}
\DeclareMathOperator{\Cat}{\mathbf{Cat}}
\DeclareMathOperator{\Bicat}{\mathbf{Bicat}}
\DeclareMathOperator{\twocat}{\mathbf{2-Cat}}
\DeclareMathOperator{\twoqcat}{\mathbf{2-QCat}}
\DeclareMathOperator{\2Gpd}{\mathbf{2-Gpd}}
\subjclass[2020]{18N10, 18N40, 18N55, 18N65, 55P15}
\keywords{2-quasi-categories, 2-quasi-groupoids, 2-groupoids, homotopy 2-types}
\date{28 November 2022}
\begin{document}
\maketitle

\begin{abstract}

We define a notion of groupoidal 2-quasi-categories and show that they are the fibrant objects of a model structure on the category of $\Theta_2$-sets. We show that this model category is Quillen equivalent to the Kan-Quillen model category of simplicial sets and that 2-truncated groupoidal 2-quasi-categories are models for homotopy 2-types.
\end{abstract}

\tableofcontents

\section*{Introduction}

One of the motivations for the development of higher category theory comes from algebraic topology. To fully encode the homotopy data of a given space, one is led to consider mathematical structures formed by objects, morphisms, 2-morphisms between morphisms, 3-morphisms between 2-morphisms... and where notions such as associativity and invertibility of morphisms are only defined up to invertible higher morphisms. In this philosophy, an $(\infty, n)$-category would be a higher category where all morphisms of dimension $> n$ are invertible. Many models have been proposed for $(\infty, n)$-categories, and comparisons of such models can be found in \cite{bergner2013comparison} and \cite{bergner2020comparison}. An object modeling $(\infty, n)$-categories is groupoidal if all $k$-morphisms $(1 \leq k \leq n)$ are also (weakly) invertible, in an appropriate sense. By considering $n$-truncated versions of these groupoidal objects -- where, roughly speaking, the data of $k$-morphisms for $k > n$ is trivial -- we expect to obtain a model for homotopy $n$-types.

For example, in the case $n = 1$, one of the several models for $(\infty, 1)$-categories are quasi-categories, which were introduced by Boardman and Vogt in the 70's and whose theory was further developed by mathematicians such as Joyal and Lurie. Quasi-categories are simplicial sets satisfying a certain lifting condition, and can also be described as fibrant-cofibrant objects of a model structure on the category of simplicial sets. An important result of Joyal is that quasi-categories where all morphisms are invertible -- which we may call groupoidal -- are precisely Kan complexes. With the notion of truncation for quasi-categories presented in \cite{joyal2008notes, campbell2020truncated}, 1-truncated groupoidal quasi-categories are exactly homotopy 1-types.

The aim of this short paper is to study similar notions in the case of 2-quasi-categories. In \cite{ara2014higher}, Ara introduces $n$-quasi-categories as models for $(\infty, n)$-categories. These are the fibrant-cofibrant objects of a model category structure on the category of $n$-cellular sets (i.e., functors $\Theta_n^{\op} \to \Set$). The category $\Theta_n$ plays the role of a generalisation of the simplex category $\Delta$, allowing to define notions of non-invertible higher morphisms.

Given a 2-quasi-category, one can consider its underlying quasi-category and quasi-categories of morphisms between any two objects, as described in \cite{campbell2020homotopy}. We propose a definition of groupoidal 2-quasi-categories as 2-quasi-categories which are locally Kan complexes (or $\infty$-groupoids), and whose underlying quasi-category is a Kan complex. We shall call these objects \textit{2-quasi-groupoids}, for short. By describing 2-quasi-groupoids as local objects in Ara's model structure for 2-quasi-categories, we obtain, starting from a theorem of \cite{campbell2020homotopy}, the following result (Theorem \ref{Quillen equivalence (infty, 2)-groupoids and spaces}):

\begin{mainthm} \label{Theorem (infty, 2)-groupoids are spaces}
There is a Quillen equivalence between the Kan-Quillen model structure on simplicial sets and a model structure on the category of 2-cellular sets whose fibrant-cofibrant objects are 2-quasi-groupoids.
\end{mainthm}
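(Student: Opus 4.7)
The plan is to obtain the desired model structure as a left Bousfield localization of Ara's model structure for 2-quasi-categories on $\Theta_2$-sets at a well-chosen small set $S$ of maps, and then to leverage the Quillen equivalence of \cite{campbell2020homotopy} between Ara's model structure and the Joyal model structure on simplicial sets (via the adjunction $i_! \dashv i^*$ induced by $i : \Delta \hookrightarrow \Theta_2$) to transport the classical passage from quasi-categories to Kan complexes into the $\Theta_2$-setting.

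First I would choose $S$ so that the $S$-local 2-quasi-categories are precisely the 2-quasi-groupoids. A natural candidate contains the image under $i_!$ of the Joyal inclusion $\Delta^1 \hookrightarrow J$ (where $J$ is the nerve of the free-living isomorphism), which forces invertibility of 1-cells at the level of the underlying quasi-category, together with cellular-suspension-type generators forcing invertibility of 2-cells in each hom quasi-category. Since Ara's model structure is combinatorial and left proper, the left Bousfield localization $\m$ at $S$ exists, its cofibrations agree with those of Ara's, and its fibrant objects are the $S$-local 2-quasi-categories, which I would identify with the 2-quasi-groupoids defined above.

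To obtain the Quillen equivalence with the Kan-Quillen model structure, I would invoke the standard transfer principle that a Quillen equivalence between left proper combinatorial model categories descends to a Quillen equivalence between compatible Bousfield localizations. The Kan-Quillen model structure is itself the left Bousfield localization of Joyal's at $\Delta^1 \hookrightarrow J$; by construction $\L i_!$ sends this generator to a map in $S$, while $\R i^*$ sends $S$-local objects to Kan complexes (since $S$-locality controls precisely the underlying quasi-category together with the hom quasi-categories), so the transport principle applies and yields the desired Quillen equivalence.

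The main obstacle will be the explicit identification of $S$-local fibrant objects with 2-quasi-groupoids: one must verify that $S$-locality is equivalent to the conjunction of the underlying quasi-category being Kan and every hom quasi-category being Kan, with no hidden invertibility conditions leaking out of the cellular-suspension formalism. This will rely on a careful study of how cellular suspension interacts with the underlying and hom quasi-categories of a 2-quasi-category as in \cite{campbell2020homotopy}, and on Joyal's characterization of Kan complexes among quasi-categories as those local with respect to $\Delta^1 \hookrightarrow J$.
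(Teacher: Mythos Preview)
Your overall strategy---build the 2-quasi-groupoid model structure as a left Bousfield localization of Ara's model structure at a small set $S$, identify the $S$-local objects via the underlying and hom quasi-categories, and then transport the Joyal-to-Kan--Quillen localization along a Quillen equivalence---is exactly the route the paper takes. But there is a genuine gap that makes your argument fail as written: you assert a Quillen \emph{equivalence} between Ara's model structure on $\widehat{\Theta_2}$ and Joyal's model structure on $\widehat{\Delta}$ via $i_!\dashv i^*$. No such equivalence exists. Ara's model structure presents $(\infty,2)$-categories and Joyal's presents $(\infty,1)$-categories; between them, $(t^*,i^*)\cong(i_!,i^*)$ is only a Quillen \emph{adjunction} (Proposition~\ref{Quillen adjunction quasi-categories and 2-quasi-categories}). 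The transfer-of-localizations principle you invoke requires an actual Quillen equivalence as input, so your one-step transport does not go through.

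What Campbell actually proves (\cite[Corollary~11.6]{campbell2020homotopy}, recorded here as Theorem~\ref{Quillen equivalence quasi-categories and locally Kan 2-quasi-categories}) is that $(t^*,i^*)$ becomes a Quillen equivalence only after one has \emph{already} localized Ara's model structure at the suspension map $[\id;\sigma^0]:\Theta_2[1;1]\to\Theta_2[1;0]$, obtaining the model structure for \emph{locally Kan} 2-quasi-categories. The paper therefore proceeds in two stages: first pass to this intermediate localization, where Campbell's theorem supplies a genuine Quillen equivalence with Joyal; then localize both sides at $\Delta[1]\to\Delta[0]$ (respectively its image $\Theta_2[1;0]\to\Theta_2[0]$) and apply the transfer principle (Theorem~\ref{transfer of localisation}). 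A short iterated-localization argument (Proposition~\ref{proposition succesive localisations}) then identifies the result with the localization of Ara's model structure at both maps at once, whose fibrant objects are precisely the 2-quasi-groupoids (Proposition~\ref{(infty, 2)-groupoids as local objects}). Your anticipated ``main obstacle''---showing $S$-locality amounts to the underlying and all hom quasi-categories being Kan---is handled exactly as you suggest, via the adjunctions $(t^*,i^*)$ and $(\Sigma,\Hom)$ together with Proposition~\ref{local fibrant objects and localisation}.
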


Then, we consider 2-truncated 2-quasi-groupoids (i.e., 2-quasi-groupoids which are 2-truncated 2-quasi-categories in the sense of \cite{campbell2020homotopy}). Using, once again, the theory of localisation of model category structures, we show (Theorem \ref{Quillen equivalence 2-truncated (infty, 2)-groupoids and 2-groupoids}) that

\begin{mainthm} \label{Theorem 2-truncated (infty, 2)-groupoids are 2-groupoids}
There is a Quillen equivalence between a model structure on the category of 2-cellular sets whose fibrant-cofibrant objects are 2-truncated 2-quasi-groupoids and Moerdijk-Svensson's model structure on the category of (strict) 2-groupoids, described in \cite{moerdijk1993algebraic}.
\end{mainthm}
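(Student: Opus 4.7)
The plan is to realize the desired equivalence by a further left Bousfield localization together with a comparison through the common homotopy theory of 2-types. First, I would construct the model structure on $\Theta_2$-sets whose fibrant objects are 2-truncated 2-quasi-groupoids as a left Bousfield localization of the model structure from Theorem A, adjoining maps that enforce 2-truncation (in parallel with the treatment of truncated quasi-categories in \cite{joyal2008notes, campbell2020truncated}). Because the Quillen equivalence of Theorem A is itself obtained via Bousfield localization, this further localization on the $\Theta_2$-set side matches the analogous 2-type localization on the simplicial side, and the induced Quillen pair with the 2-type model structure on simplicial sets is automatically a Quillen equivalence.

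Next, I would invoke the main theorem of \cite{moerdijk1993algebraic}, which states that Moerdijk-Svensson's model structure on strict 2-groupoids is Quillen equivalent to the 2-type model structure on simplicial sets, via a fundamental 2-groupoid / classifying space adjunction. To produce a \emph{direct} Quillen equivalence between the two sides of the statement, I would use Ara's 2-nerve $N \colon \twocat \to \Theta_2\text{-}\Set$ restricted to strict 2-groupoids, together with its left adjoint $\tau_2$. The verification that $(\tau_2, N)$ is a Quillen adjunction reduces to showing that the nerve of a strict 2-groupoid is a 2-truncated 2-quasi-groupoid -- inner-horn fillers are automatic for nerves of 2-categories, invertibility of all $1$- and $2$-cells is immediate from strict invertibility, and 2-truncation is built into the target of $N$ -- together with checking that $N$ preserves fibrations. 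I would then conclude that $(\tau_2, N)$ is a Quillen equivalence by comparing derived functors through simplicial sets and applying 2-out-of-3 for Quillen equivalences.

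The main obstacle will be showing that $N$ preserves fibrations into the further-localized model structure -- equivalently, that $\tau_2$ sends the generating trivial cofibrations, including those introduced by the two rounds of Bousfield localization enforcing invertibility and 2-truncation, to weak equivalences of strict 2-groupoids. Identifying the image of these localization generators under $\tau_2$ and recognizing them as Moerdijk-Svensson weak equivalences will require a careful case analysis, guided by the explicit description of $\tau_2$ on the cells of $\Theta_2$ and by the fact that strict invertibility allows one to solve all the relevant lifting problems on the nose.
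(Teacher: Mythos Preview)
Your zigzag through homotopy 2-types (first two paragraphs) is sound in outline and is in fact how the paper obtains its Corollary~\ref{zigzag of Quillen equivalences}, but it does not prove Theorem~B as stated: a zigzag of Quillen equivalences yields an equivalence of homotopy categories, not a single Quillen equivalence between $\widehat{\Theta_2}$ and $\2Gpd$. You recognise this and try to produce a direct Quillen pair, and that is where the proposal breaks.

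The fatal gap is the assertion that ``inner-horn fillers are automatic for nerves of 2-categories''. In the $\Theta_2$-setting this is false: by Proposition~\ref{When the strict 2-nerve of a 2-category is a 2-quasi-category} (Ara, \cite[Proposition~7.10]{ara2014higher}), the strict 2-nerve $N_2(\cc)$ is a 2-quasi-category \emph{if and only if} the only invertible 2-morphisms of $\cc$ are identities. Hence for any nontrivial 2-groupoid $\g$ the strict nerve $N_2(\g)$ is \emph{not} fibrant in Ara's model structure, let alone in its localisation. Your proposed right adjoint therefore fails to send fibrant objects to fibrant objects, so $(\tau_2,N_2)$ is not a Quillen adjunction for the target model structure and the subsequent 2-out-of-3 argument cannot get started. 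The ``case analysis'' you flag as the main obstacle is not the real difficulty; the real difficulty is that you are using the wrong nerve.

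The paper's remedy is to replace the strict nerve by Campbell's homotopy coherent nerve $N_h:\Bicat\to\widehat{\Theta_2}$, for which $N_h(\cc)$ \emph{is} always a 2-quasi-category (Theorem~\ref{Quillen adjunction 2-quasi-categories and bicategories}), and to take as left adjoint the composite $\tau=F\lambda\tau_b$. The argument then avoids any comparison through simplicial sets: one first knows that $(\tau_b,N_h)$ is a homotopy reflection onto $\Bicat_s$, composes with the Quillen equivalence $\lambda:\Bicat_s\rightleftarrows\twocat$ and the homotopy reflection $F:\twocat\rightleftarrows\2Gpd$, and then invokes the criterion of Theorem~\ref{criterion homotopy refection Quillen equivalence}(2). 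The key nontrivial input is Proposition~\ref{characterisation of (infty, 2)-groupoids as bigroupoids} (a 2-quasi-category $X$ is a 2-quasi-groupoid iff $\lambda\tau_b X$ is a bigroupoid), together with Lack's result that the unit $\a\to F\a$ is a biequivalence whenever $\a$ is already a bigroupoid; these combine to show that the unit $X\to N_h(\tau X)$ is a weak equivalence for every 2-truncated 2-quasi-groupoid $X$.
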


The link to homotopy 2-types is made using a result of \cite{moerdijk1993algebraic} which provides a Quillen equivalence between 2-groupoids and homotopy 2-types.

We may summarize the discussion above by placing each model in the framework of (weak) $(r,n)$-categories: higher categories where all $k$-morphisms are invertible for $k > n$ and trivial for $k > r$. Models for general (weak) $(r,n)$-categories can be found in \cite{rezk2010cartesian}.

\vspace{2mm}

\begin{center}
\begin{tabular}{|c | c |} 
 \hline
 General concept & Model \\ 
 \hline\hline
 $(\infty, 2)$-categories & 2-quasi-categories  \\ 
 \hline
 $(\infty, 0)$-categories = $\infty$-groupoids = spaces & 2-quasi-groupoids  \\
 \hline
 $(2,2)$-categories = weak 2-categories & 2-truncated 2-quasi-categories \\
 \hline
 $(2,0)$-categories = weak 2-groupoids & 2-truncated 2-quasi-groupoids  \\
 \hline
\end{tabular}
\end{center}

\vspace{2mm}

It should be possible to generalize to $n \geq 3$ the comparison between homotopy $n$-types and $n$-truncated groupoidal $n$-quasi-categories, for a suitable notion of such. However, other methods are necessary, since it is known that strict 3-groupoids do not model all homotopy 3-types, for example (cf. \cite[2.7]{simpson2011homotopy}). This generalization is the object of future work of the author.

\vspace{2mm}

\textbf{Organisation of the paper.} In the first preliminary section, we recall some aspects of the theory of localisation of model structures, our main technical tool. In section 2, we briefly recall the definition of the category $\Theta_2$ and of 2-quasi-categories. The notion of 2-quasi-groupoid is presented in the third section, where it is also proved that 2-quasi-groupoids are models for spaces (Theorem \ref{Theorem (infty, 2)-groupoids are spaces}). Section 4 is a recollection of some results of \cite{campbell2020homotopy} concerning the homotopy coherent nerve and 2-truncated 2-quasi-categories, which will allow us to prove the equivalence between 2-truncated 2-quasi-groupoids and 2-groupoids (Theorem \ref{Theorem 2-truncated (infty, 2)-groupoids are 2-groupoids}) in section 5. The last section presents Moerdijk-Svensson's comparison between 2-groupoids and homotopy 2-types.

\vspace{2mm}

\textbf{Acknowledgement.} We would like to thank Muriel Livernet and Clemens Berger for the helpful and insightful conversations. This work is part of my Master's thesis written under their supervision. We would also like to highlight the importance of Alexander Campbell's article \cite{campbell2020homotopy} for the ideas involved in the proofs of the main theorems.

\vspace{2mm}

\textbf{Notation.} If $\a$ is a small category, we denote by $\widehat{\a}$ the category of presheaves of sets on $\a$, i.e., of functors $\a^{\op} \to \Set$. If $X$ is an object of $\widehat{\a}$ and $a$ is an object of $\a$, we write $X_a$ for the set $X(a)$. The Yoneda embedding $a \mapsto \a[a] := \Hom_{\a}(-, a)$ defines a fully faithful functor $\a \to \widehat{\a}$. If $f: a \to b$ is a morphism in $\a$, we still denote by $f: \a[a] \to \a[b]$ its image under the Yoneda embedding.

When representing an adjunction by $F: \cc \rightleftarrows \d: G$, the functor $F$ is left adjoint to the functor $G$.

\section{Localisation of model category structures}

We recall some notions and results about the localisation of model categories, mainly following the appendix A of \cite{campbell2020truncated}. A complete reference is \cite{hirschhorn2003model}.

\begin{parag} \label{bousfield localisation}
Let $(\m, \Cof, \W, \Fib)$ be a model category structure on a category $\m$. A model category structure $(\m, \Cof_{\loc}, \W_{\loc}, \Fib_{\loc})$ on $\m$ is a \textit{(left) Bousfield localisation} of $(\m, \Cof, \W, \Fib)$ if $\Cof_{\loc} = \Cof$ and $\W \subset \W_{\loc}$. When studying a model category structure and a given localisation, we shall write $\m$ and $\m_{\loc}$ to refer to the original model structure and to its localisation, respectively. We shall also call \textit{local fibration} (resp. \textit{local fibrant object}, resp. \textit{local weak equivalence}) a fibration (resp. fibrant object, resp. weak equivalence) of $\m_{\loc}$. We see that a Bousfield localisation is completely determined by its fibrant objects, i.e., the local fibrant objects. It is useful to know that a morphism between local fibrant objects is a weak equivalence (resp. fibration) in $\m$ if and only if it is a local weak equivalence (resp. local fibration).
\end{parag}

\begin{parag} \label{homotopy reflection}
A Quillen adjunction $F: \m \rightleftarrows \n: G$ is said to be a \textit{homotopy reflection} if the right-derived functor $\R G: \Ho(\n) \to \Ho(\m)$ is fully faithful. For example, if $\m_{\loc}$ is a Bousfield localisation of $\m$, then the adjunction $\id_\m: \m \rightleftarrows \m_{\loc}: \id_\m$ is a homotopy reflection (see \cite[Proposition 7.19]{joyal2007quasi}). Homotopy reflections were originally introduced in \cite{dugger2001combinatorial}, where they were called homotopically surjective maps of model categories.
\end{parag}

There is a criterion to know whether a homotopy reflection $F: \m \rightleftarrows \n: G$ yields a Quillen equivalence after localisation of $\m$.

\begin{theorem} \label{criterion homotopy refection Quillen equivalence}
Let $F: \m \rightleftarrows \n: G$ be a Quillen adjunction between model categories and $\m_{\loc}$ be a Bousfield localisation of $\m$. 

(1) The adjunction $F: \m_{\loc} \rightleftarrows \n: G$ is a Quillen adjunction if and only $G$ sends every fibrant object of $\n$ to a fibrant object of $\m_{\loc}$.

(2) Suppose that $F: \m \rightleftarrows \n: G$ is a homotopy reflection. The adjunction $F: \m_{\loc} \rightleftarrows \n: G$ is a Quillen equivalence if and only if it is a Quillen adjunction and for every fibrant-cofibrant object $X$ of $\m_{\loc}$, there is a fibrant object $Y$ of $\n$ and a weak equivalence $X \to G(Y)$ in $\m$.
\end{theorem}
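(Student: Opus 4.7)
The plan is to treat the two parts separately. For (1), the forward direction is immediate: if $F: \m_{\loc} \rightleftarrows \n: G$ is a Quillen adjunction then $G$ is right Quillen, hence preserves fibrant objects (the terminal object is local fibrant). For the converse, since $\Cof_{\loc} = \Cof$, the classes of trivial fibrations in $\m$ and in $\m_{\loc}$ coincide, so $G$ already preserves trivial fibrations from the original Quillen adjunction. It remains to check that $G$ sends fibrations in $\n$ to local fibrations. Using the characterisation recalled in \ref{bousfield localisation}---a morphism between local fibrant objects is an $\m$-fibration if and only if it is a local fibration---one handles fibrations between fibrant objects of $\n$ at once. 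To upgrade this to all fibrations I would appeal to the standard cofibrantly-generated criterion: $F$ is left Quillen for $\m_{\loc}$ iff it carries a set of generating trivial cofibrations of $\m_{\loc}$ into trivial cofibrations of $\n$. Such a generating set is obtained by adjoining the localising maps to the generating trivial cofibrations of $\m$; the first family is handled by the original adjunction, and the adjoint reformulation of sending the localising maps to trivial cofibrations is precisely that $G$ carries fibrant objects of $\n$ into local fibrant objects of $\m_{\loc}$.

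For (2), the ``only if'' direction is straightforward. Given a fibrant-cofibrant object $X$ of $\m_{\loc}$, choose a fibrant replacement $FX \to Y$ in $\n$; the derived unit $X \to GY$ is a local weak equivalence by the Quillen equivalence hypothesis. Since $X$ is local fibrant by assumption and $GY$ is local fibrant by part (1), the characterisation of \ref{bousfield localisation} promotes this local weak equivalence to an $\m$-weak equivalence, as required.

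For the ``if'' direction I would exploit the homotopy reflection assumption to reduce to an equivalence of homotopy categories. By part (1) we have a right-derived functor $\R G: \Ho(\n) \to \Ho(\m_{\loc})$ computed by applying $G$ to fibrant objects of $\n$; the aim is to show it is an equivalence. For fully faithfulness, the image of $\R G$ lies in the full subcategory spanned by local fibrant objects, and since $\m$- and $\m_{\loc}$-weak equivalences between local fibrant objects agree, the hom-sets $\Ho(\m_{\loc})(GY, GY')$ and $\Ho(\m)(GY, GY')$ coincide (a cylinder on a cofibrant replacement of $GY$ in $\m$ also serves in $\m_{\loc}$); fully faithfulness then descends from the hypothesis that $\R G: \Ho(\n) \to \Ho(\m)$ is fully faithful. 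Essential surjectivity is direct: any object of $\Ho(\m_{\loc})$ is represented by a fibrant-cofibrant $X$ in $\m_{\loc}$, and the hypothesis supplies a fibrant $Y$ in $\n$ with an $\m$-weak equivalence $X \to GY$, which is \emph{a fortiori} a local weak equivalence, so $X \cong \R G(Y)$ in $\Ho(\m_{\loc})$.

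The main obstacle I anticipate lies in part (1), namely upgrading preservation of fibrant objects to preservation of all fibrations; this relies on $\m_{\loc}$ being cofibrantly generated and on an explicit description of its generating trivial cofibrations, which is a genuine feature of the localisation setting rather than a formal consequence of the model-categorical data. Once this is in hand, part (2) is essentially a clean translation between Quillen equivalence and equivalence of homotopy categories, made possible by the homotopy reflection hypothesis together with the agreement of weak equivalences between local fibrant objects.
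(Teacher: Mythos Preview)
Your argument for part (1) has a genuine gap. The theorem as stated imposes no cofibrant-generation hypothesis on $\m$ or $\m_{\loc}$, so the appeal to a set of generating trivial cofibrations is unwarranted. Even in the combinatorial setting, your description of the generating trivial cofibrations of $\m_{\loc}$ as ``the generating trivial cofibrations of $\m$ together with the localising maps'' is not correct: one needs at the very least the horns on the localising maps (pushout-products with generating cofibrations, as in Hirschhorn's construction), and the adjoint reformulation of sending \emph{those} to trivial cofibrations is not simply ``$G$ preserves fibrant objects''. The paper avoids all of this with a single general lemma of Joyal \cite[Lemma 7.14]{joyal2007quasi}: in any model category, a cofibration is acyclic if and only if it has the left lifting property against all fibrations between fibrant objects. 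Granting this, one checks directly that $F: \m_{\loc} \to \n$ preserves acyclic cofibrations: if $i$ is an acyclic cofibration in $\m_{\loc}$ and $p: X \to Y$ is a fibration between fibrant objects of $\n$, then $G(p)$ is an $\m$-fibration (original adjunction) between local fibrant objects (hypothesis), hence a local fibration by \ref{bousfield localisation}, so $i$ lifts against $G(p)$ and by adjunction $F(i)$ lifts against $p$. No combinatoriality and no description of generating sets are needed.

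For part (2) the paper simply cites an external reference, so there is no in-paper argument to compare against. Your outline is essentially correct; the fully-faithfulness step is perhaps cleanest phrased by noting that the identity adjunction $\m \rightleftarrows \m_{\loc}$ is itself a homotopy reflection (cf.\ \ref{homotopy reflection}), so $\R\id: \Ho(\m_{\loc}) \to \Ho(\m)$ is fully faithful, and on fibrant objects of $\n$ the composite $\R\id \circ \R G_{\loc}$ agrees with the original $\R G$ (since $G$ lands in local fibrant objects by part (1)); fully-faithfulness of $\R G_{\loc}$ then follows formally from that of the composite and of $\R\id$.
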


\begin{proof}
(1) The condition is necessary since right Quillen functors preserve fibrant objects. To prove that it is sufficient, it is enough to show that if $G$ sends fibrant objects of $\n$ to fibrant objects of $\m_{\loc}$, then $F: \m_{\loc} \to \n$ preserves acyclic cofibrations.

Let $i$ be an acyclic cofibration of $\m_{\loc}$. We want to show that the cofibration $F(i)$ is acyclic. By \cite[Lemma 7.14]{joyal2007quasi}, it is the case if and only if $F(i)$ has the left lifting property with respect to all fibrations between fibrant objects. Let $p: X \to Y$ be such a fibration. By adjunction, we know that $F(i)$ has the left lifting property with respect to $p$ if and only if $i$ has the left lifting property with respect to $G(p)$. But $G(p)$ is a fibration of $\m$ (since $G: \n \to \m$ is right Quillen) between local fibrant objects (by the hypothesis), thus a fibration of $\m_{\loc}$. Therefore, the lift exists since $i$ is an acyclic fibration of $\m_{\loc}$.

(2) See \cite[Theorem A.14]{campbell2020truncated}.
\end{proof}

\begin{parag} \label{homotopy mapping space and local objects}
Given a model category $\m$ and two objects $X, Y$ of $\m$, we can consider the \textit{homotopy mapping space} $\underline{\Ho \m}(X, Y)$, which is the image of the pair $(X, Y)$ by the functor $\underline{\Ho \m}: \Ho(\m)^{\op} \times \Ho(\m) \to \Ho(\widehat{\Delta})$\footnote{When writing $\Ho(\widehat{\Delta})$, we always consider the Kan-Quillen model structure on the category of simplicial sets} induced by $\Hom_{\m}: \m^{\op} \times \m \to \Set$ (cf. \cite[A.2]{ara2014higher}). 

Let $S$ be a class of morphisms of $\m$. An object $X$ of $\m$ is \textit{$S$-local} (or \textit{local with respect to $S$}) if for every morphism $f: A \to B$ of $S$, the induced map
$$\underline{\Ho \m}(f, X): \underline{\Ho \m}(B, X) \to \underline{\Ho \m}(A, X)$$
is an isomorphism (in the homotopy category $\Ho(\widehat{\Delta})$).

A morphism $f: A \to B$ of $\m$ is an \textit{$S$-equivalence} if for every $S$-local object $X$, the induced map
$$\underline{\Ho \m}(f, X): \underline{\Ho \m}(B, X) \to \underline{\Ho \m}(A, X)$$
is an isomorphism (in the homotopy category $\Ho(\widehat{\Delta})$).

If there is a Bousfield localisation $\m_{\loc}$ of $\m$ whose local fibrant objects are the $S$-local objects and whose weak equivalences are the $S$-equivalences, we say that $\m_{\loc}$ is a \textit{(Bousfield) localisation of $\m$ with respect to $S$}, and we denote it by $L_S \m$.
\end{parag}

\begin{example} \label{Kan-Quillen vs Joyal}
The Kan-Quillen model category structure on simplicial sets is a localisation of Joyal's model structure with respect to the morphism $\Delta[1] \to \Delta[0]$, cf. \cite[Proposition 3.30]{campbell2020truncated}.
\end{example}

We will state a result, due to Smith, about the existence of the localisation of a model category with respect to a certain set of morphisms. Before, let us recall some definitions. A model category is \textit{left proper} if the pushout of every weak equivalence along a cofibration is a weak equivalence. A model category where all objects are cofibrant is left proper (see \cite[Corollary 13.1.3]{hirschhorn2003model}). A model category is \textit{combinatorial} if it is cofibrantly generated and locally presentable. 


\begin{theorem} \label{smith existence theorem}
Let $\m$ be a left proper and combinatorial model category. Let $S$ be a set of morphisms of $\m$. Then the localisation $L_S \m$ of $\m$ with respect to $S$ exists and is left proper and combinatorial.
\end{theorem}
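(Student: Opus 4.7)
The theorem is the classical existence result of J.~Smith for left Bousfield localisations of combinatorial model categories. The cellular analogue is the main content of Chapter~4 of \cite{hirschhorn2003model}, and the combinatorial version follows a parallel strategy; I sketch the plan.

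Fix generating sets $I$ of cofibrations and $J$ of trivial cofibrations of $\m$. Using functorial factorisation, replace each $f \in S$ by a weakly equivalent cofibration between cofibrant objects; this does not alter the classes of $S$-local objects or $S$-equivalences, so we may assume every element of $S$ has this form. Declare the cofibrations of $L_S \m$ to be those of $\m$ (still generated by $I$) and the weak equivalences to be the $S$-equivalences of \ref{homotopy mapping space and local objects}. The technical heart of the proof is the construction of a set $J_S$ of ``horn-like'' local trivial cofibrations: one adjoins to $J$ the pushout-product maps of the form $f \mathbin{\hat\otimes} (\partial \Delta[n] \hookrightarrow \Delta[n])$ for $f \in S$ and $n \geq 0$, computed using a chosen cosimplicial framing. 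The guiding property is that an $\m$-fibration with $\m$-fibrant codomain should have the right lifting property with respect to $J_S$ if and only if it is a local fibration between local fibrant objects, which correctly identifies the fibrations of $L_S \m$ between fibrant objects.

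The main obstacle is then Smith's recognition criterion for cofibrantly generated model structures on locally presentable categories: the class $\W_{\loc}$ of $S$-equivalences must (i) satisfy 2-out-of-3 and be closed under retracts, (ii) contain $\W$, (iii) contain every map with the right lifting property with respect to $I$, (iv) be closed under transfinite composition of pushouts of elements of $J_S$, and (v) be an accessible and accessibly embedded subcategory of $\m^{[1]}$. Conditions (i)--(iii) are essentially formal from the definition in \ref{homotopy mapping space and local objects}, while (iii) is automatic because maps with the right lifting property with respect to $I$ are $\m$-trivial fibrations, hence $\m$-weak equivalences, hence in $\W_{\loc}$. The delicate steps are (iv) and (v); both rely on the compatibility of the functors $\underline{\Ho \m}(A, -)$, for $A$ cofibrant, with sufficiently filtered homotopy colimits. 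This is precisely where combinatoriality and local presentability of $\m$ are indispensable.

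Combinatoriality of $L_S \m$ is then built into the construction, as $\m$ is locally presentable and $I$, $J_S$ are small generating sets. Left properness of $L_S \m$ is a formal consequence of left properness of $\m$ together with the identification of cofibrations and pushouts in the two structures: given a cofibration $i$ and an $S$-equivalence $w$, one factors $w$ as a cofibration followed by an $\m$-weak equivalence; pushing out along $i$ preserves the first factor as a cofibration and the second as an $\m$-weak equivalence by left properness of $\m$, hence the composite pushout is an $S$-equivalence.
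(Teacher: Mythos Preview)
The paper does not prove this result at all: its entire proof reads ``See \cite[Theorem 4.7]{barwick2010left}.'' Your proposal, by contrast, outlines the actual argument behind Smith's theorem --- replacing $S$ by cofibrations between cofibrant objects, enlarging $J$ to a set $J_S$ of horns via framed pushout-products, and invoking Smith's recognition principle with accessibility of $\W_{\loc}$ as the key input. This is indeed the standard strategy carried out in Barwick's paper and (in the cellular setting) in Hirschhorn, so your sketch is on target; it simply goes well beyond what the paper itself does.

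Two small expository gaps are worth flagging. First, condition (iv) as you state it --- closure of $\W_{\loc}$ under transfinite compositions of pushouts of elements of $J_S$ --- is not quite the hypothesis in Smith's criterion: one needs the full class $\Cof \cap \W_{\loc}$ to be closed under pushout and transfinite composition, and it is precisely here (not only in the final paragraph) that left properness of $\m$ enters the existence proof. Second, your argument for left properness of $L_S\m$ is incomplete as written: after factoring $w = p \circ j$ with $j$ a cofibration and $p$ an $\m$-weak equivalence, you note that the pushout of $p$ along a cofibration is an $\m$-weak equivalence by left properness of $\m$, but you must also argue that the pushout of $j$ is an $S$-equivalence. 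This holds because $j$ is an $S$-equivalence by 2-out-of-3, hence a trivial cofibration of $L_S\m$, and trivial cofibrations are stable under pushout in any model category --- but saying only that the pushout of $j$ is ``a cofibration'' does not suffice. Neither point undermines the approach.
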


\begin{proof}
See \cite[Theorem 4.7]{barwick2010left}.
\end{proof}

\begin{remark}
If $F: \m \rightleftarrows \n: G$ is a Quillen adjunction between model categories, the induced adjunction between the homotopy categories is usually denoted by $\L F: \Ho (\m) \rightleftarrows \Ho(\n): \R G$. In what follows, we will abuse language and also denote by $\L F$ the functor $\L F := F Q: \m \to \n$, where $Q$ is a fixed functorial cofibrant replacement in the model category $\m$. In all the applications presented in this paper, all objects of $\m$ will be cofibrant, and so we shall take $Q = \id_{\m}$.
\end{remark}

We can transfer localisations of model structures along Quillen adjunctions.

\begin{proposition} \label{local fibrant objects and localisation}
Let $F: \m \rightleftarrows \n: G$ be a Quillen adjunction between model categories $\m$ and $\n$. Let $S$ be a class of morphisms of $\m$. A fibrant object $Y$ of $\n$ is $\L F(S)$-local if and only if $G(Y)$ is $S$-local. 
\end{proposition}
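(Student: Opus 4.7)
The plan is to reduce the statement to the derived adjunction isomorphism for homotopy mapping spaces. Recall that a Quillen adjunction $F \dashv G$ induces a derived adjunction $\L F \dashv \R G$ between the homotopy categories $\Ho(\m)$ and $\Ho(\n)$, and moreover this adjunction is compatible with the enrichment over $\Ho(\widehat{\Delta})$ given by homotopy mapping spaces. Concretely, for any object $A$ of $\m$ and any object $Z$ of $\n$ there is a natural isomorphism
$$\underline{\Ho \n}(\L F(A), Z) \cong \underline{\Ho \m}(A, \R G(Z))$$
in $\Ho(\widehat{\Delta})$. A reference for this simplicial derived adjunction is \cite[A.2]{ara2014higher}.

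Now fix a fibrant object $Y$ of $\n$. Since cofibrant replacement does not change the image in $\Ho(\m)$, the assumption that $Y$ is fibrant ensures that $\R G(Y)$ may be represented by $G(Y)$ itself, so we have a natural isomorphism
$$\underline{\Ho \n}(\L F(A), Y) \cong \underline{\Ho \m}(A, G(Y))$$
in $\Ho(\widehat{\Delta})$, natural in $A \in \m$. In particular, for any morphism $f: A \to B$ in $\m$, we obtain a commutative square in $\Ho(\widehat{\Delta})$ whose horizontal arrows are the above isomorphisms and whose vertical arrows are $\underline{\Ho \n}(\L F(f), Y)$ and $\underline{\Ho \m}(f, G(Y))$.

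It follows that for each $f \in S$ the map $\underline{\Ho \n}(\L F(f), Y)$ is an isomorphism in $\Ho(\widehat{\Delta})$ if and only if $\underline{\Ho \m}(f, G(Y))$ is. By the definition of locality recalled in \ref{homotopy mapping space and local objects}, this means precisely that $Y$ is $\L F(S)$-local if and only if $G(Y)$ is $S$-local. The only subtle point is the compatibility of the derived adjunction with the homotopy mapping spaces; once this is invoked, the rest is a formal manipulation using the fibrancy of $Y$ to identify $\R G(Y)$ with $G(Y)$.
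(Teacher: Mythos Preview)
Your argument is correct and is precisely the standard one: the paper does not give its own proof but defers to \cite[Proposition 3.1.12]{hirschhorn2003model}, whose proof proceeds exactly as you do, via the derived adjunction isomorphism on homotopy function complexes together with the identification $\R G(Y)\simeq G(Y)$ for fibrant $Y$. There is nothing to add.
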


\begin{proof}
See \cite[Proposition 3.1.12]{hirschhorn2003model}.
\end{proof}

\begin{theorem} \label{transfer of localisation}
Let $F: \m \rightleftarrows \n: G$ be a Quillen adjunction between model categories $\m$ and $\n$. Let $S$ be a class of morphisms of $\m$. If the localisations $L_S \m$ and $L_{\L F(S)} \n$ exist, then 
$$F: L_S \m \rightleftarrows L_{\L F(S)} \n: G$$
is a Quillen adjunction between the localised model categories. 

Moreover, if $F: \m \rightleftarrows \n: G$ is a Quillen equivalence, then so is $F: L_S \m \rightleftarrows L_{\L F(S)} \n: G$.
\end{theorem}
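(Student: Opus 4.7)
The plan is to apply Theorem \ref{criterion homotopy refection Quillen equivalence} twice, once to get the Quillen adjunction and once to upgrade it to a Quillen equivalence.

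For the Quillen-adjunction assertion, I would verify the criterion of Theorem \ref{criterion homotopy refection Quillen equivalence}(1): given a fibrant object $Y$ of $L_{\L F(S)} \n$, the object $G(Y)$ is fibrant in $\m$ (since $G$ is right Quillen for the original adjunction) and $S$-local by Proposition \ref{local fibrant objects and localisation} (applied to $Y$, which is both fibrant in $\n$ and $\L F(S)$-local), hence fibrant in $L_S \m$.

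For the Quillen-equivalence assertion, assume that $F \dashv G$ is a Quillen equivalence. The crucial first step is to consider the intermediate adjunction $F: \m \rightleftarrows L_{\L F(S)} \n: G$, where only the right side is localised. This is automatically a Quillen adjunction (since $L_{\L F(S)} \n$ has the same cofibrations as $\n$ and more weak equivalences) and I would argue it is moreover a homotopy reflection: its right derived functor factors as the fully faithful inclusion $\Ho(L_{\L F(S)} \n) \hookrightarrow \Ho(\n)$ (which is the right derived functor of the identity homotopy reflection of paragraph \ref{homotopy reflection}) followed by the equivalence $\R G: \Ho(\n) \to \Ho(\m)$ supplied by the original Quillen equivalence.

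I would then apply Theorem \ref{criterion homotopy refection Quillen equivalence}(2) to this homotopy reflection and the Bousfield localisation $L_S \m$ of $\m$. The Quillen-adjunction clause is already the first assertion proved above. For the remaining clause, let $X$ be a fibrant-cofibrant object of $L_S \m$, so $X$ is cofibrant in $\m$ and $S$-local fibrant. I would choose a fibrant replacement $F(X) \to Y$ of $F(X)$ in $\n$ (and not in $L_{\L F(S)} \n$): this way the adjunct $X \to G(Y)$ is a weak equivalence in $\m$ by the hypothesis that $F \dashv G$ is a Quillen equivalence. Consequently $G(Y)$, being weakly equivalent to the $S$-local object $X$, is itself $S$-local; Proposition \ref{local fibrant objects and localisation} then yields that $Y$ is $\L F(S)$-local and therefore fibrant in $L_{\L F(S)} \n$, providing the required data. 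The main conceptual point, which I expect to be the real obstacle, is the idea of introducing the partly-localised adjunction $F: \m \rightleftarrows L_{\L F(S)} \n: G$ so that Theorem \ref{criterion homotopy refection Quillen equivalence}(2) becomes applicable, together with the trick of performing the fibrant replacement in the unlocalised $\n$ in order to reduce the last clause to the original Quillen equivalence.
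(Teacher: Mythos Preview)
Your argument is correct. The paper does not actually prove this statement but simply defers to Hirschhorn's book (Proposition~3.3.20 there), so your route is genuinely different: you derive the result entirely from the tools already assembled in the paper, namely Theorem~\ref{criterion homotopy refection Quillen equivalence} and Proposition~\ref{local fibrant objects and localisation}. The key manoeuvre --- passing through the half-localised adjunction $F: \m \rightleftarrows L_{\L F(S)} \n: G$, recognising it as a homotopy reflection (as the composite of a Quillen equivalence with the identity Bousfield-localisation reflection of \S\ref{homotopy reflection}), and then taking the fibrant replacement of $F(X)$ in the \emph{unlocalised} $\n$ so that the original Quillen equivalence applies directly and forces $Y$ to be $\L F(S)$-local via Proposition~\ref{local fibrant objects and localisation} --- is exactly what makes Theorem~\ref{criterion homotopy refection Quillen equivalence}(2) fire. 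Your approach has the merit of being self-contained within the paper's own framework; the paper's citation simply avoids rehearsing a standard result.

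One small organisational point: the fact that $F: \m \rightleftarrows L_{\L F(S)} \n : G$ is Quillen, which you implicitly need already in your first paragraph in order to invoke Theorem~\ref{criterion homotopy refection Quillen equivalence}(1) with $L_{\L F(S)}\n$ in the role of ``$\n$'', is only justified in your second paragraph. You may wish to state (and justify) it up front.
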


\begin{proof}
See \cite[Proposition 3.3.20]{hirschhorn2003model}
\end{proof}

We end this section stating a proposition which will allow us to understand successive localisations of a model category.

\begin{lemma} \label{lemma successive localisations}
Let $\m$ be a model category and $\m_{\loc}$ be a Bousfield localisation of $\m$. For every object $X$ and every local fibrant object $Y$ of $\m$, the homotopy mapping spaces $\underline{\Ho \m}(X, Y)$ and $\underline{\Ho \m_{\loc}}(X, Y)$ are naturally isomorphic in $\Ho(\widehat{\Delta})$.
\end{lemma}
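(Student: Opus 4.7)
The strategy is to compute both homotopy mapping spaces using a single cosimplicial frame on $X$, exploiting that $\m$ and $\m_{\loc}$ share the same underlying category, hom-sets, and cofibrations.

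First, I would reduce to the case where $X$ is cofibrant and observe that $Y$ is automatically fibrant in $\m$. Since $\Cof_{\loc} = \Cof$, the classes of acyclic fibrations in $\m$ and $\m_{\loc}$ coincide (both being characterized by the right lifting property against cofibrations), and so a functorial cofibrant replacement $QX \to X$ in $\m$ is also one in $\m_{\loc}$. Dually, from $\W \subset \W_{\loc}$ together with $\Cof = \Cof_{\loc}$ we get $\W \cap \Cof \subset \W_{\loc} \cap \Cof_{\loc}$, whence $\Fib_{\loc} \subset \Fib$, so every local fibrant object is fibrant in $\m$.

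Next, I would construct a cosimplicial frame $X^\bullet \to \mathrm{const}(X)$ on the (now cofibrant) $X$ in the Reedy model structure on $\m^{\Delta}$. Because Reedy cofibrations in $\m^{\Delta}$ depend only on the cofibrations of $\m$ (via the latching maps), which agree with those of $\m_{\loc}$, and because the inclusion $\W \subset \W_{\loc}$ preserves the level-wise weak equivalence conditions required of a frame, the same object $X^\bullet$ serves as a cosimplicial frame on $X$ simultaneously for $\m$ and for $\m_{\loc}$.

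Finally, I would invoke the classical representation of homotopy mapping spaces by framings: up to isomorphism in $\Ho(\widehat{\Delta})$, $\underline{\Ho\m}(X, Y)$ is given by the simplicial set $[n] \mapsto \Hom_{\m}(X^n, Y)$ whenever $X$ is cofibrant, $X^\bullet$ is a cosimplicial frame on $X$, and $Y$ is fibrant. Since $\Hom_{\m} = \Hom_{\m_{\loc}}$ as functors on the common underlying category, the same simplicial set represents both $\underline{\Ho\m}(X, Y)$ and $\underline{\Ho\m_{\loc}}(X, Y)$, yielding the desired natural isomorphism (naturality in $X$ coming from the functoriality of $Q$ and of the frame). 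The main obstacle is essentially bibliographical: matching the paper's abstract definition of $\underline{\Ho\m}(-,-)$, as the functor on homotopy categories induced from $\Hom_{\m}$, with the concrete framings description (standard, e.g. Hirschhorn, Chapters 16--17) requires some care but is routine once the identification is made.
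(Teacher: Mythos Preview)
Your argument is correct: the identification of cofibrant replacements, the observation that local fibrant implies fibrant, and the fact that a cosimplicial frame in $\m$ is automatically one in $\m_{\loc}$ combine exactly as you say to give the same simplicial set computing both homotopy mapping spaces. The paper does not supply its own proof but simply cites \cite[Lemma~A.4]{ara2014higher}; your sketch is essentially the standard argument behind that reference, so there is nothing to compare beyond noting that you have unpacked what the paper leaves to the literature.
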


\begin{proof}
See \cite[Lemma A.4]{ara2014higher}
\end{proof}

\begin{proposition} \label{proposition succesive localisations}
Let $\m$ be a model category and $S, T$ be two classes of morphisms of $\m$. Suppose that the localisations $L_S \m$, $L_T \m$, $L_T L_S \m$, $L_S L_T \m$ and $L_{S \cup T} \m$ exist. Then the model categories $L_T L_S \m$, $L_S L_T \m$ and $L_{S \cup T} \m$ are the same.
\end{proposition}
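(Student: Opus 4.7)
The plan is to use the principle recalled in \ref{bousfield localisation}: a Bousfield localisation of $\m$ is completely determined by its class of fibrant objects, since its cofibrations are forced to be those of $\m$. Since all three candidate model structures $L_T L_S \m$, $L_S L_T \m$ and $L_{S \cup T} \m$ are Bousfield localisations of $\m$ (the first two as iterated localisations, the third by definition), it suffices to show they have the same fibrant objects.

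First I would describe the fibrant objects of $L_{S \cup T} \m$: by definition, these are the fibrant objects $Y$ of $\m$ such that $\underline{\Ho \m}(f, Y)$ is an isomorphism in $\Ho(\widehat{\Delta})$ for every $f \in S \cup T$. Equivalently, they are the fibrant objects of $\m$ which are both $S$-local and $T$-local. I claim the fibrant objects of $L_T L_S \m$ are exactly the same class (and then by the symmetric argument, so are those of $L_S L_T \m$). A fibrant object of $L_T L_S \m$ is, by definition, a fibrant object $Y$ of $L_S \m$ such that $\underline{\Ho (L_S \m)}(f, Y)$ is an isomorphism for every $f \in T$. In particular, such a $Y$ is fibrant in $\m$ and $S$-local.

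The key step is to identify the $T$-locality condition in $L_S \m$ with the $T$-locality condition in $\m$. For this I invoke Lemma \ref{lemma successive localisations}: because $Y$ is a local fibrant object of $\m$, for every object $X$ there is a natural isomorphism $\underline{\Ho \m}(X, Y) \cong \underline{\Ho (L_S \m)}(X, Y)$ in $\Ho(\widehat{\Delta})$. Applied to the source and target of each $f \in T$, this shows that $\underline{\Ho (L_S \m)}(f, Y)$ is an isomorphism if and only if $\underline{\Ho \m}(f, Y)$ is. Thus a fibrant object of $L_T L_S \m$ is precisely a fibrant object of $\m$ which is both $S$-local and $T$-local, i.e. a fibrant object of $L_{S \cup T} \m$.

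By the symmetric argument the fibrant objects of $L_S L_T \m$ coincide with those of $L_{S \cup T} \m$ as well. Since all three Bousfield localisations share the same cofibrations (those of $\m$) and the same fibrant objects, they coincide as model structures on $\m$. The only nontrivial ingredient is the identification of the two notions of $T$-locality via Lemma \ref{lemma successive localisations}; the rest is formal unpacking of definitions.
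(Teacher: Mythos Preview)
Your proposal is correct and follows essentially the same approach as the paper's proof: both reduce to identifying the fibrant objects of the three localisations, and both use Lemma~\ref{lemma successive localisations} as the key ingredient to show that $T$-locality in $L_S\m$ coincides with $T$-locality in $\m$ for objects that are already $S$-local (hence fibrant in $L_S\m$), with the remaining case handled by symmetry. The paper phrases the argument as the equivalence of three explicit conditions and spells out both implications via a commutative square, but the substance is the same.
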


\begin{proof}
Since a model structure is completely determined by its cofibrations and fibrant objects (cf. \cite[Proposition E.1.10]{joyal2008thetheory}) and the 3 considered model structures have the same cofibrations, it is sufficient to show that they have the same fibrant objects. Let $X$ be an object of $\m$. We claim that the following assertions are equivalent:
\begin{enumerate}
    \item $X$ is a $T$-local object of $L_S \m$
    \item $X$ is an $S$-local object of $L_T \m$
    \item $X$ is an $(S \cup T)$-local objects of $\m$
\end{enumerate}

We will show that $(1) \Leftrightarrow (3)$. The equivalence $(2) \Leftrightarrow (3)$ follows by exchanging the roles of $S$ and $T$.

$(1) \Rightarrow (3)$ Suppose that $X$ is a $T$-local object of $L_S \m$. We have to show that, for every $f \in S \cup T$, $f: A \to B$, the map
$$\underline{\Ho \m}(f, X): \underline{\Ho \m}(B, X) \to \underline{\Ho \m}(A, X)$$
is an isomorphism. This is true if $f \in S$, since $X$ is fibrant in $L_T L_S \m$, so it is in particular fibrant in $L_S \m$, which means it is $S$-local in $\m$. If $f \in T$, we consider the commutative square
\[\begin{tikzcd}
	{\underline{\text{Ho} \mathcal{M}}(B, X)} && {\underline{\text{Ho} \mathcal{M}}(A, X)} \\
	{\underline{\text{Ho} L_S\mathcal{M}}(B, X)} && {\underline{\text{Ho} L_S\mathcal{M}}(A, X)}
	\arrow["{\underline{\text{Ho} \mathcal{M}}(f, X)}", from=1-1, to=1-3]
	\arrow["{\underline{\text{Ho} L_S\mathcal{M}}(f, X)}", from=2-1, to=2-3]
	\arrow["\cong"', from=1-1, to=2-1]
	\arrow["\cong", from=1-3, to=2-3]
\end{tikzcd}\]
where the isomorphisms are given by Lemma \ref{lemma successive localisations}. The bottom arrow is an isomorphism, since $X$ is $T$-local in $L_S \m$ by assumption, and thus the top arrow is also an isomorphism, as desired.

$(3) \Rightarrow (1)$ Let $X$ be a $(S \cup T)$-local object of $\m$. Let $f: A \to B$ be a morphism in the class $T$. We have to show that the map
$$\underline{\Ho L_S \m}(f, X): \underline{\Ho L_S \m}(B, X) \to \underline{\Ho L_S \m}(A, X)$$
is an isomorphism. Since $X$ is $(S \cup T)$-local in $\m$, it is in particular $S$-local in $\m$, and so it is a fibrant object of $L_S \m$. Therefore, we can use Lemma \ref{lemma successive localisations} to consider a commutative square as above. The top arrow is an isomorphism since $X$ is $T$-local in $\m$, which implies that the bottom arrow is also an isomorphism.
\end{proof}

\section{2-quasi-categories}

Ara introduces in \cite{ara2014higher} $n$-quasi-categories as models for $(\infty, n)$-categories. These are presheaves on the category $\Theta_n$ which are fibrant objects for a certain model category structure on $\widehat{\Theta_n}$. Here, we consider the case $n=2$.

\begin{parag} \label{the category Delta}
We denote by $\Delta$ the category of finite ordinals $[n] = \{0 < 1 < \ldots < n\}$ and non-decreasing maps between them. There is a fully faithful inclusion $\Delta \to \Cat$, where we see each object of $\Delta$ as the category associated to the respective ordered set. 

The morphisms of $\Delta$ are generated by the face and degeneracy maps. Face maps are the maps $\delta^{i}: [n] \to [n+1]$ skipping $i$, for $0 \leq i \leq n+1$. Degeneracy maps are the maps $\sigma^i: [n+1] \to [n]$ sending both $i$ and $i+1$ to $i$, for $0 \leq i \leq n$.
\end{parag}

\begin{parag} \label{the category Theta_2}
We introduce the category $\Theta_2$ as a wreath product $\Delta \wr \Delta$, as first presented by Berger in \cite{berger2007iterated}. The objects of $\Theta_2$ are lists $[n; \mathbf{q}] = [n; q_1, \ldots, q_n]$, where $n, q_1, \ldots, q_n$ are non-negative integers. A morphism $[\alpha, \boldsymbol{\alpha}] : [m; \mathbf{p}] \to [n; \mathbf{q}]$ is the data of a morphism $\alpha: [m] \to [n]$ in $\Delta$ and of a morphism $\alpha_j: [p_i] \to [q_j]$ in $\Delta$ for every $\alpha(i-1) < j \leq \alpha(i)$, $1 \leq i \leq m$.

We shall think of $\Theta_2$ as a full subcategory of $\twocat$. An object $[n; \mathbf{q}]$ of $\Theta_2$ is represented by the 2-category freely generated by the 2-graph with $n+1$ objects $0, 1, \ldots, n$, an ordered set $\{(j,0), \ldots, (j, q_j)\}$ of  1-arrows $j-1 \to j$ for every $1 \leq j \leq n$, and one 2-arrow between any two consecutive 1-arrows between the same objects. For example, the 2-graph which generates the object $[3; 1, 0, 2]$ can be represented as below.

\[\begin{tikzcd}
	0 &&& 1 &&& 2 &&& 3
	\arrow[""{name=0, anchor=center, inner sep=0}, "{(1,0)}"{description}, curve={height=-18pt}, from=1-1, to=1-4]
	\arrow[""{name=1, anchor=center, inner sep=0}, "{(1,1)}"', curve={height=18pt}, from=1-1, to=1-4]
	\arrow["{(2,0)}"{description}, from=1-4, to=1-7]
	\arrow[""{name=2, anchor=center, inner sep=0}, "{(3,0)}"{description}, curve={height=-30pt}, from=1-7, to=1-10]
	\arrow[""{name=3, anchor=center, inner sep=0}, "{(3,2)}"{description}, curve={height=30pt}, from=1-7, to=1-10]
	\arrow[""{name=4, anchor=center, inner sep=0}, "{(3,1)}"{description}, from=1-7, to=1-10]
	\arrow[shorten <=5pt, shorten >=5pt, Rightarrow, from=0, to=1]
	\arrow[shorten <=4pt, shorten >=4pt, Rightarrow, from=2, to=4]
	\arrow[shorten <=4pt, shorten >=4pt, Rightarrow, from=4, to=3]
\end{tikzcd}\]

\vspace{2mm}

A morphism $[\alpha, \boldsymbol{\alpha}] : [m; \mathbf{p}] \to [n; \mathbf{q}]$ corresponds to the 2-functor which sends each object $i$ of the 2-category $[m; \mathbf{p}]$ to $\alpha(i)$, each 1-morphism $(i,k)$ to the composite $$(\alpha(i), \alpha_{\alpha(i)}(k)) \circ \ldots \circ (\alpha(i-1) + 1, \alpha_{\alpha(i-1) + 1}(k))$$ when $\alpha(i) > \alpha(i-1)$ and to $\id_{\alpha(i)}$ when $\alpha(i) = \alpha(i-1)$, and each 2-morphism of $[m; \mathbf{p}]$ to the unique possible 2-morphism of $[n; \mathbf{q}]$.
\end{parag}

\begin{parag} \label{2-cellular sets}
We consider the category $\widehat{\Theta_2}$ of \textit{2-cellular sets}. The representable 2-cellular sets are denoted by $\Theta_2[n; \mathbf{q}]$. The \textit{boundary} $\partial \Theta_2[n; \mathbf{q}]$ of a representable $\Theta_2[n; \mathbf{q}]$ is the 2-cellular set generated by the non-surjective\footnote{A morphism $[\alpha; \boldsymbol{\alpha}]: [m; \mathbf{p}] \to [n; \mathbf{q}]$ is surjective if $\alpha:[m] \to [n]$ is surjective and all $\alpha_j: [p_i] \to [q_j]$ are surjective.} morphisms $[m; \mathbf{p}] \to [n; \mathbf{q}]$ in $\Theta_2$. We denote by $\delta_{n; \mathbf{q}}$ the boundary inclusion $\partial \Theta_2[n; \mathbf{q}] \to \Theta_2[n; \mathbf{q}]$.
\end{parag}


\begin{parag} \label{Ara's model structure}
There is a model category structure on $\widehat{\Theta_2}$, constructed in \cite{ara2014higher} by means of Cisinki's theory of localizers, which we will call \textit{Ara's model structure for 2-quasi-categories}. The cofibrations of this model structure are monomorphisms of 2-cellular sets (and so every object is cofibrant). The fibrant objects are called \textit{2-quasi-categories}.
\end{parag}

\begin{parag}\label{left Kan extension and nerve/singular functor}
Let $F: \a \to \b$ be a functor from a small category $\a$ to a category $\b$. The \textit{evaluation (or nerve) functor} associated to $F$ is the functor $F^!: \b \to \widehat{\a}$, given by
$F^!(b)_a = \Hom_{\b}(F(a), b)$ for every $b \in \b$ and every $a \in \a$.

If $\b$ is cocomplete, the evaluation functor admits a left adjoint $F_!: \widehat{\a} \to \b$, which is the left Kan extension of $F$ along the Yoneda embedding $\a \to \widehat{\a}$.
\end{parag}

\begin{parag} \label{the strict 2-nerve}
The inclusion $\Theta_2 \to \twocat$ induces a nerve functor $N_2: \twocat \to \widehat{\Theta_2}$, which we call the \textit{(strict) 2-nerve} functor. Explicitly, for a 2-category $\cc$ and $[n; \mathbf{q}] \in \Theta_2$, we have
$$N_2 (\cc)_{n; \mathbf{q}} = \Hom_{\twocat}([n; \mathbf{q}], \cc)$$

The strict 2-nerve functor is fully faithful, cf. \cite[Theorem 1.12]{berger2002cellular}.
\end{parag}

The (strict 1-)nerve of a category is always a quasi-category. The analogous statement is not true, in general, for 2-categories.


\begin{proposition} \label{When the strict 2-nerve of a 2-category is a 2-quasi-category}
The strict 2-nerve of a 2-category $\cc$ is a 2-quasi-category if and only if the only invertible 2-morphisms of $\cc$ are the identities.
\end{proposition}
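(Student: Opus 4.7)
The plan is to reduce the 2-quasi-categoricity of $N_2(\cc)$ to a right lifting property against Ara's generating anodyne extensions and then to transpose each such lifting problem along the adjunction $i_! \dashv i^! = N_2$ of paragraph~\ref{left Kan extension and nerve/singular functor}, associated to the inclusion $i: \Theta_2 \hookrightarrow \twocat$. Concretely, for any generating anodyne extension $j: K \hookrightarrow L$, a lift of a map $K \to N_2(\cc)$ along $j$ corresponds bijectively to an extension of the transposed strict 2-functor $i_!(K) \to \cc$ along $i_!(j): i_!(K) \to i_!(L)$. The generating anodyne extensions then split conceptually into "compositional" ones, whose lifts encode vertical and horizontal composition of 1- and 2-morphisms, and "invertibility" ones, whose lifts witness that certain 2-cells are equivalences in the $(\infty,2)$-sense.

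For $(\Leftarrow)$, assuming the only invertible 2-cells of $\cc$ are identities, the transposes of the compositional anodyne extensions are solved by strict composition in $\cc$, using the functoriality and strict associativity of the operations. For the invertibility anodyne extensions, the transposed 2-functor $i_!(K) \to \cc$ presents a 2-cell $\alpha$ of $\cc$ together with partial data exhibiting it as invertible; under the hypothesis such an $\alpha$ must already be an identity, and the extension $i_!(L) \to \cc$ is then produced canonically by sending the missing higher coherence data to identities. This shows that $N_2(\cc)$ lifts against every generating anodyne extension, hence is a 2-quasi-category.

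For $(\Rightarrow)$, suppose $N_2(\cc)$ is a 2-quasi-category and let $\alpha: f \Rightarrow g$ be an invertible 2-cell in $\cc$. The plan is to produce a lifting problem from an "invertibility" anodyne extension classifying $\alpha$ and its inverse, whose filler would transpose to a strict 2-functor out of an auxiliary 2-category whose defining relations force $f = g$ and $\alpha = \id_f$. The main obstacle is to pin down a concrete anodyne extension with this property, since Ara's generating anodyne extensions in \cite{ara2014higher} are constructed indirectly via Cisinski's formalism. A more expedient route is to exploit the fact that the hom 2-cellular sets of a 2-quasi-category are quasi-categories, together with the observation that for a strict 2-category $\cc$ the hom 2-cellular set of $N_2(\cc)$ at $(x,y)$ coincides with the ordinary nerve of the hom-category $\cc(x,y)$: Joyal's classical dichotomy then identifies equivalences in this nerve with isomorphisms in $\cc(x,y)$, i.e.\ with invertible 2-morphisms of $\cc$, and the fibrancy condition forces each such equivalence to be an identity.
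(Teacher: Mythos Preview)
The paper does not give a proof here; it simply cites \cite[Proposition 7.10]{ara2014higher}. Your proposal instead attempts a direct argument, which is welcome, but the $(\Rightarrow)$ direction contains a genuine gap.

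Your final sentence asserts that once equivalences in $\Hom_{N_2(\cc)}(x,y) \cong N(\cc(x,y))$ are identified with isomorphisms in $\cc(x,y)$, ``the fibrancy condition forces each such equivalence to be an identity.'' But the only fibrancy condition you have invoked at that point is that $\Hom_{N_2(\cc)}(x,y)$ is a quasi-category, and this is vacuous: the nerve of \emph{any} category is a quasi-category, and its equivalences are exactly the isomorphisms, which need not be identities. Nothing in the hom-quasi-category property constrains invertible 2-cells of $\cc$. What actually does the work in Ara's model structure is the \emph{completeness} part of fibrancy, coming from the interval object in Cisinski's formalism (equivalently, from the localization at the maps $J \to \Theta_2[1;0]$ with $J$ a walking invertible 2-cell, in the spirit of Rezk's completeness condition). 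For a discrete presheaf such as $N_2(\cc)$, lifting against that class forces the set of invertible 2-cells to coincide with the set of identity 2-cells. You alluded to these ``invertibility anodyne extensions'' earlier but then abandoned them precisely where they are needed; the hom argument you substitute cannot replace them.

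The $(\Leftarrow)$ direction is also too schematic to stand as a proof: you have not said what the generating anodyne extensions actually are, and the description of the ``invertibility'' ones (``a 2-cell $\alpha$ together with partial data exhibiting it as invertible'') does not match the shape of the relevant maps, which are rather inclusions of a cell into a walking-equivalence object. The outline is salvageable, but as written it presupposes exactly the content of Ara's Proposition~7.10 that the paper is citing.
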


\begin{proof}
It follows from \cite[Proposition 7.10]{ara2014higher} in the case $n=2$.
\end{proof}

\section{2-quasi-groupoids}

In this section, we provide a definition of groupoidal 2-quasi-category, which we call 2-quasi-groupoid. We show that there is a model category structure on the category of 2-cellular sets such that the fibrant-cofibrant objects are precisely the 2-quasi-groupoids. Moreover, we show that this model category structure is Quillen equivalent to the Kan-Quillen model structure on simplicial sets.

\begin{parag} \label{Adjunction (t,i)}
The inclusion $i: \Cat \to \twocat$ has a left adjoint $t: \twocat \to \Cat$, called the \textit{truncation} functor, which sends a strict 2-category $\cc$ to the category $t(\cc)$ whose objects are the same as those of $\cc$ and morphisms are equivalence classes of 1-morphisms of $\cc$ with respect to the equivalence relation freely generated by 2-morphisms. The adjunction
$$t: \twocat \rightleftarrows \Cat : i$$
restricts to an adjunction 
$$t: \Theta_2 \rightleftarrows \Delta : i$$

Explicitly, we have $t([n; \mathbf{q}]) = [n]$ and $i([n]) = [n; 0 , \ldots, 0]$.

The adjunction above yields an adjunction
$$t^*: \widehat{\Delta} \rightleftarrows \widehat{\Theta_2} : i^*$$
between the respective presheaf categories. Note that the functor $t^*$ is isomorphic to the functor $i_!: \widehat{\Delta} \to \widehat{\Theta_2}$ which extends the functor $\Delta \xrightarrow[]{i} \Theta_2 \xrightarrow[]{} \widehat{\Theta_2}$ by colimits.
\end{parag}

\begin{proposition} \label{Quillen adjunction quasi-categories and 2-quasi-categories}
The adjunction $(t^*, i^*)$ is a Quillen adjunction between the category of simplicial sets with Joyal's model structure and the category of 2-cellular sets with Ara's model structure for 2-quasi-categories.
\end{proposition}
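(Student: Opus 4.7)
The plan is to verify that $t^*$ is a left Quillen functor, i.e., preserves cofibrations and acyclic cofibrations. The starting observation, coming from the adjunction $t \dashv i$ on small categories, is the explicit formula $(t^* X)_{[n; \mathbf{q}]} = X(t[n; \mathbf{q}]) = X_n$, valid for every simplicial set $X$ and every object $[n; \mathbf{q}] \in \Theta_2$. With this in hand, preservation of cofibrations is immediate: cofibrations in both Joyal's and Ara's model structures are monomorphisms (see \ref{Ara's model structure}), monomorphisms of presheaves are detected objectwise, and $t^*$ is evidently objectwise injective whenever $X \to Y$ is.

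The bulk of the work is preservation of acyclic cofibrations. The approach I would take is to check this on a generating set of acyclic cofibrations for Joyal's model structure. The natural candidates split into two families: the inner horn inclusions $\Lambda^k[n] \hookrightarrow \Delta[n]$ for $0 < k < n$, and a set of ``invertibility'' maps making 1-morphisms invertible (e.g., a map $\{0\} \hookrightarrow J$ where $J$ is a simplicial model of the walking isomorphism). For inner horns, $t^*$ sends $\Lambda^k[n] \hookrightarrow \Delta[n]$ to the inner horn inclusion of the thin representable $\Theta_2[n; 0, \ldots, 0] = i([n])$, which, by Ara's construction via the wreath product $\Theta_2 = \Delta \wr \Delta$, is among the generating acyclic cofibrations of Ara's structure. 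For invertibility generators, $t^*$ produces analogous 2-cellular maps forcing 1-morphism invertibility, which are likewise acyclic cofibrations in Ara's structure by design.

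The hard part will be the verification on the invertibility generators, which requires a careful inspection of Ara's generating set of acyclic cofibrations. This step is essentially carried out in \cite{ara2014higher}, and the cleanest resolution is simply to quote this Quillen adjunction from there. A more conceptual alternative, avoiding generator-level bookkeeping, would invoke Campbell's result \cite{campbell2020homotopy} that $i^*$ sends every 2-quasi-category to a quasi-category: combined with the adjunction identity $\underline{\Hom}_{\widehat{\Theta_2}}(t^*(-), W) \cong \underline{\Hom}_{\widehat{\Delta}}(-, i^*(W))$ for $W$ a 2-quasi-category, and the fact that Joyal-weak-equivalences are detected by internal mapping spaces into fibrant objects, this would yield preservation of Joyal weak equivalences by $t^*$, and hence (combined with the cofibration step) the desired preservation of acyclic cofibrations.
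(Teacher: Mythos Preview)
The paper's own proof is a one-line citation to \cite[Proposition 7.5]{campbell2020homotopy}, so your fallback suggestion to ``simply quote this Quillen adjunction'' is exactly what the paper does (with Campbell rather than Ara as the source). In that sense your proposal and the paper agree.

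Your sketched direct argument, however, has two issues worth flagging. First, the inner horn inclusions together with a map $\{0\} \hookrightarrow J$ do \emph{not} form a generating set for the acyclic cofibrations of Joyal's model structure; no explicit such set is known. They are only a \emph{pseudo}-generating set, in the sense that they detect fibrations between fibrant objects. So ``checking on generators'' as you describe does not immediately suffice. This can be repaired---one uses the criterion that an adjunction is Quillen provided the left adjoint preserves cofibrations and the right adjoint preserves fibrations between fibrant objects (cf.\ the argument style in Theorem~\ref{criterion homotopy refection Quillen equivalence}(1))---but you should say so explicitly rather than speak of generators.

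Second, your ``more conceptual alternative'' is circular in the logic of this paper: the fact that $i^*$ sends 2-quasi-categories to quasi-categories is precisely what the paper deduces \emph{from} this proposition in \S\ref{underlying quasi-category}. If you want to run the argument in that direction you must cite an independent proof of that fact (and Campbell's own proof of Proposition~7.5 does not proceed that way). So as written, the alternative does not stand on its own.
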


\begin{proof}
See \cite[Proposition 7.5]{campbell2020homotopy}.
\end{proof}

\begin{parag} \label{underlying quasi-category}
Let $X$ be a 2-quasi-category. Proposition \ref{Quillen adjunction quasi-categories and 2-quasi-categories} implies that the simplicial set $i^*(X)$ is a quasi-category. We call $i^*(X)$ the \textit{underlying quasi-category of $X$}.
\end{parag}

\begin{parag} \label{right truncation}
The inclusion $i: \Cat \to \twocat$ also admits a right adjoint $t_r: \twocat \to \Cat$, called the \textit{right truncation}. If $\cc$ is a 2-category, then $t_r(\cc)$ is the category whose objects and morphisms are the objects and (1-)morphisms of $\cc$. Using the adjunction $(i, t_r)$ and the definition of the nerve functors, it is easy to see that the square
\begin{center}
    \begin{tikzcd}
            \twocat \arrow[r, "t_r"] \arrow[d, "N_2" swap] & \Cat \arrow[d, "N"] \\
            \widehat{\Theta_2} \arrow[r, "i^*"] & \widehat{\Delta}
    \end{tikzcd}
\end{center}
is commutative (up to isomorphism). 
\end{parag}

\begin{parag} \label{Adjunction (Sigma, Hom)}
Let $X$ be a 2-cellular set, and $x,y \in X_0$. Consider the simplicial set $\Hom_X(x,y)$, whose set of $n$-simplices is given by the pullback
\begin{center}
    \begin{tikzcd}
            \Hom_X(x,y)_n \arrow[d] \arrow[r] & X_{1; n} \arrow[d] \\ 
            \{*\} \arrow[r, "(x \text{,} y)", swap] &  X_0 \times X_0
    \end{tikzcd}
\end{center}
for $n \geq 0$, and face and degeneracy maps induced by $X([\id; \delta^i])$ and $X([\id; \sigma^i])$, respectively.

The association $(X, x, y) \mapsto \Hom_X(x,y)$ is actually part of an adjunction between bipointed 2-cellular sets and simplicial sets: 
$$\Sigma: \widehat{\Delta} \rightleftarrows \partial \Theta_2[1;0] / \widehat{\Theta_2}: \Hom$$

The left adjoint $\Sigma$ is obtained by left Kan extension along the Yoneda embedding of  the functor $\Delta \to \partial \Theta_2[1;0] / \widehat{\Theta_2}$ which maps $[n]$ to $(\Theta_2[1; n], 0, 1)$ for every $n \geq 0$.
\end{parag}

\begin{proposition} \label{Quillen adjunction suspension Hom}
The adjunction $(\Sigma, \Hom)$ is a Quillen adjunction between the category of simplicial sets with Joyal's model structure and the category of bipointed 2-cellular sets with the model structure induced by Ara's model structure for 2-quasi-categories.
\end{proposition}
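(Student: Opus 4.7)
The plan is to verify directly that $\Sigma$ preserves cofibrations and trivial cofibrations. Cofibrations in the slice $\partial \Theta_2[1;0] / \widehat{\Theta_2}$ are created by the forgetful functor and so coincide with the monomorphisms of the underlying 2-cellular sets (since this is the case in Ara's model structure), while cofibrations in Joyal's model structure on $\widehat{\Delta}$ are the monomorphisms of simplicial sets. Since $\Sigma$ is defined as a left Kan extension it preserves colimits, and monomorphisms in a presheaf category are stable under pushouts and transfinite compositions; so to show $\Sigma$ preserves monomorphisms it suffices to check that each map $\Sigma(\partial \Delta[n]) \to \Sigma(\Delta[n]) = \Theta_2[1;n]$ is a monomorphism. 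A direct computation identifies $\Sigma(\partial \Delta[n])$ with the sub-2-cellular set of $\Theta_2[1;n]$ generated by the images of the maps $[\id_{[1]}; \delta^i] : \Theta_2[1;n-1] \to \Theta_2[1;n]$ for $0 \leq i \leq n$, and the claim follows.

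For trivial cofibrations, since Joyal's model structure is cofibrantly generated and the class of trivial cofibrations in Ara's model structure is saturated (closed under pushouts, transfinite compositions and retracts), it suffices to fix a generating set of trivial cofibrations for Joyal's model structure and verify that its $\Sigma$-image lies inside Ara's trivial cofibrations. The inner horn inclusions $\Lambda^k[n] \hookrightarrow \Delta[n]$ ($0 < k < n$) get sent by $\Sigma$ to ``inner horn''-type inclusions into the representable $\Theta_2[1;n]$ (obtained by removing the top cell of $\Theta_2[1;n]$ together with one inner face), which figure among the generating trivial cofibrations of Ara's model structure by construction (see \cite[\S 5]{ara2014higher}). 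The remaining generators of Joyal's model structure, namely those forcing the invertibility of equivalences, are analogously identified with trivial cofibrations in Ara's model structure.

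The main obstacle is this last matching step: fixing a convenient set of generating trivial cofibrations for Joyal's model structure and checking, using the explicit combinatorial description of Ara's model structure via Cisinski's theory of localizers, that each $\Sigma$-image belongs to Ara's trivial cofibrations. An alternative route circumventing part of this bookkeeping is to apply \cite[Lemma 7.14]{joyal2007quasi} (as invoked in the proof of Theorem \ref{criterion homotopy refection Quillen equivalence}): once cofibration preservation is settled, $\Sigma$ preserves acyclic cofibrations if and only if $\Hom$ sends fibrations between fibrant objects to fibrations in Joyal's model structure, reducing the problem to a lifting-property check about mapping quasi-categories between 2-quasi-categories.
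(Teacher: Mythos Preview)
The paper itself does not give a proof: it simply cites \cite[Proposition 6.5]{campbell2020homotopy}. So there is no in-paper argument to compare against, and your proposal has to be assessed on its own.

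Your treatment of cofibrations is fine. Cofibrations in both model structures are the monomorphisms, the forgetful functor from the slice creates them, and your cellular argument reducing to the boundary inclusions is correct.

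The gap is in the trivial-cofibration step. Your first route presupposes an explicit set of generating trivial cofibrations for Joyal's model structure consisting of the inner horn inclusions together with some ``equivalence-inverting'' maps. No such convenient generating set is available: the inner horn inclusions only generate the inner anodyne maps, and even adjoining a map like $\{0\}\hookrightarrow J$ yields a class whose right lifting class characterises fibrations \emph{between fibrant objects}, not all Joyal fibrations; hence its saturation is strictly smaller than the class of trivial cofibrations. So checking $\Sigma$ on that set does not, by itself, show that $\Sigma$ preserves all trivial cofibrations. In addition, the assertion that the images $\Sigma(\Lambda^k[n]\hookrightarrow\Delta[n])$ ``figure among the generating trivial cofibrations of Ara's model structure by construction'' is not something one can read off \cite[\S5]{ara2014higher}: Ara's model structure is produced via Cisinski's theory, and its generating trivial cofibrations are given abstractly, not as an explicit list containing these vertical horn inclusions. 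That they are weak equivalences is true, but it requires an argument (e.g.\ via the spine/Segal description of $2$-quasi-categories), not a citation of the construction.

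Your alternative route via \cite[Lemma 7.14]{joyal2007quasi} is the sound one and is essentially how Campbell proceeds: once $\Sigma$ is known to preserve cofibrations, it suffices to show that $\Hom$ sends fibrations between fibrant bipointed $\Theta_2$-sets to fibrations in Joyal's model structure (equivalently, to isofibrations between quasi-categories). You stop exactly at this point. Completing the argument requires identifying the fibrations between fibrant objects in Ara's model structure (via the lifting properties characterising $2$-quasi-categories and the equivalence-extension property) and then checking by adjunction that $\Hom$ carries these to inner isofibrations. That is the substantive content of the result, and it is not carried out in your proposal.
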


\begin{proof}
See \cite[Proposition 6.5]{campbell2020homotopy}.
\end{proof}

\begin{parag} \label{hom quasi-category}
Let $X$ be a 2-quasi-category. Proposition \ref{Quillen adjunction suspension Hom} implies that, for all $x,y \in X_0$ the simplicial set $\Hom_X(x,y)$ is a quasi-category. We call $\Hom_X(x,y)$ the \textit{hom-quasi-category} between $x$ and $y$. We say that $X$ is \textit{locally Kan} if, for every $x,y \in X_0$, the hom-quasi-category $\Hom_X(x,y)$ is a Kan complex.
\end{parag}

\begin{proposition} \label{locally Kan 2-quasi-categories as local objects}
A 2-quasi-category $X$ is locally Kan if and only if $X$ is local with respect to $[\id; \sigma^0]: \Theta_2[1;1] \to \Theta_2[1;0]$ in the model structure for 2-quasi-categories.
\end{proposition}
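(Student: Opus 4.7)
My plan is to combine the Quillen adjunction $(\Sigma, \Hom)$ from Proposition \ref{Quillen adjunction suspension Hom} with the characterization of Kan complexes as the $\{\sigma^0 : \Delta[1] \to \Delta[0]\}$-local objects of Joyal's model structure (Example \ref{Kan-Quillen vs Joyal}). The preliminary observation is that, by the description of $\Sigma$ as the left Kan extension along the Yoneda embedding of $[n] \mapsto (\Theta_2[1;n], 0, 1)$, one has $\Sigma(\sigma^0) = [\id; \sigma^0] : (\Theta_2[1;1], 0, 1) \to (\Theta_2[1;0], 0, 1)$ in the bipointed category $\partial\Theta_2[1;0]/\widehat{\Theta_2}$; since every simplicial set is Joyal-cofibrant, $\L\Sigma = \Sigma$.

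Applying Proposition \ref{local fibrant objects and localisation} to the Quillen adjunction $(\Sigma, \Hom)$ with $S = \{\sigma^0\}$ then gives that a fibrant bipointed 2-cellular set $(X, x, y)$ is $\Sigma(\sigma^0)$-local if and only if the hom-quasi-category $\Hom_X(x, y)$ is $\sigma^0$-local in Joyal's model structure, equivalently, is a Kan complex. Consequently, locally Kan 2-quasi-categories are precisely those $X$ for which every bipointed $(X, x, y)$ is $\Sigma(\sigma^0)$-local. What remains is to show that a 2-quasi-category $X$ is $[\id; \sigma^0]$-local in $\widehat{\Theta_2}$ if and only if $(X, x, y)$ is $\Sigma(\sigma^0)$-local for every $(x, y) \in X_0 \times X_0$.

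For this reduction, the key observation is that $\partial\Theta_2[1;0] \cong \Theta_2[0] \sqcup \Theta_2[0]$, since the only non-surjective morphisms into $[1;0]$ in $\Theta_2$ are the two constants at $0$ and at $1$. Thus, for $q \in \{0, 1\}$ and $X$ fibrant, the boundary cofibration $\partial\Theta_2[1;0] \hookrightarrow \Theta_2[1;q]$ induces a Kan fibration from $\underline{\Ho \widehat{\Theta_2}}(\Theta_2[1;q], X)$ to $\underline{\Ho \widehat{\Theta_2}}(\partial\Theta_2[1;0], X)$, whose set of components is $X_0 \times X_0$ and whose fiber over a pair $(x, y)$ is a model for the bipointed homotopy mapping space from $(\Theta_2[1;q], 0, 1)$ to $(X, x, y)$. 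Because $[\id; \sigma^0]$ restricts to the identity on $\partial\Theta_2[1;0]$, the map it induces between these total spaces is a map over $X_0 \times X_0$, hence a weak equivalence if and only if its restriction to each fiber is one — the latter being exactly the bipointed statement addressed above. Chaining the three equivalences yields the proposition.

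The main obstacle I expect is making the fibration-and-fiber analysis in the last paragraph fully rigorous: identifying the fibers of the restriction over $\partial\Theta_2[1;0]$ with the bipointed homotopy mapping spaces, and invoking that a map between Kan fibrations over a homotopically discrete base is a weak equivalence if and only if it is fiberwise. Once this bookkeeping is in place, the statement follows by mechanically concatenating Proposition \ref{local fibrant objects and localisation}, the explicit computation of $\Sigma(\sigma^0)$, and Example \ref{Kan-Quillen vs Joyal}.
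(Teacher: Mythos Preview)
Your proposal is correct and follows essentially the same route as the paper: both reduce the locality condition in $\widehat{\Theta_2}$ to the bipointed slice $\partial\Theta_2[1;0]/\widehat{\Theta_2}$, then apply Proposition~\ref{local fibrant objects and localisation} to $(\Sigma,\Hom)$ together with Example~\ref{Kan-Quillen vs Joyal}. The paper simply packages your fibration-and-fiber reduction as a general lemma (``a fibrant $X$ in $\m$ is $f$-local iff $(X,x)$ is $f$-local in $C/\m$ for every $x\colon C\to X$''), stated without proof; one small correction is that the base $\underline{\Ho\,\widehat{\Theta_2}}(\partial\Theta_2[1;0],X)$ is not homotopically discrete in general, but the fiberwise criterion for weak equivalences between Kan fibrations over a common base holds regardless, so your argument goes through unchanged.
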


\begin{proof}
Let $\m$ be a model category and $C$ be a cofibrant object of $\m$. Let $(A, a: C \to A)$ and $(B, b: C \to B)$ be two objects of $C/\m$, and $f: A \to B$ be a morphism in $\m$ such that $f a = b$ (that is, $f$ is a morphism in $C/\m$). One can show that a fibrant object $X$ of $\m$ is local with respect to $f$ if and only if it for every morphism $x: C \to X$, the object $(X, x)$ of $C/\m$ is local with respect to $f$ in the induced model structure.

Taking $\m$ to be $\widehat{\Theta_2}$ with the model structure for 2-quasi-categories and $C$ to be $\partial \Theta_2[1;0]$, we get that a 2-quasi-category $X$ is local with respect to $[\id; \sigma^0]: \Theta_2[1;1] \to \Theta_2[1;0]$ if and only for every $x,y \in X_0$, the object $(X, x, y)$ of $\partial \Theta_2[1;0] / \widehat{\Theta_2}$ is local with respect $[\id; \sigma^0]$. But $[\id; \sigma^0] = \Sigma(\sigma^0: \Delta[1] \to \Delta[0])$. Thus, by applying Proposition \ref{local fibrant objects and localisation} to the adjunction $\Sigma: \widehat{\Delta} \rightleftarrows \partial \Theta_2[1;0]/ \widehat{\Theta_2}: \Hom$, we conclude that a 2-quasi-category $X$ is local with respect to $[\id; \sigma^0]$ if and only if for every $x,y \in X_0$, the hom-quasi-category $\Hom_X(x,y)$ is local with respect to $\Delta[1] \to \Delta[0]$, which is equivalent to saying that $\Hom_X(x,y)$ is a Kan complex (cf. Example \ref{Kan-Quillen vs Joyal}).
\end{proof}

Using the characterisation of locally Kan 2-quasi-categories as local objects in Ara's model structure, we can apply the existence theorem \ref{smith existence theorem} to get a \textit{model structure for locally Kan 2-quasi-categories}.

\begin{proposition} \label{existence of model structure for locally Kan 2-quasi-categories}
There exists a model structure on the category of 2-cellular sets whose cofibrations are monomorphisms, and whose fibrant objects are locally Kan 2-quasi-categories. This model structure is a Bousfield localisation of Ara's model structure with respect to $[\id; \sigma^0]: \Theta_2[1;1] \to \Theta_2[1;0]$.
\end{proposition}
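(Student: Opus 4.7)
The plan is to invoke the existence theorem for Bousfield localisations (Theorem \ref{smith existence theorem}) applied to Ara's model structure on $\widehat{\Theta_2}$ with respect to the set $S = \{[\id; \sigma^0]: \Theta_2[1;1] \to \Theta_2[1;0]\}$, and then identify the fibrant objects of $L_S \widehat{\Theta_2}$ with locally Kan 2-quasi-categories using Proposition \ref{locally Kan 2-quasi-categories as local objects}.

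First I would check the hypotheses of Theorem \ref{smith existence theorem} for Ara's model structure. The set $S$ is clearly a set (a singleton). Ara's model structure is combinatorial, since it is constructed in \cite{ara2014higher} as a cofibrantly generated model structure on the presheaf category $\widehat{\Theta_2}$, which is locally presentable. It is also left proper: by paragraph \ref{Ara's model structure}, the cofibrations are the monomorphisms, so every object is cofibrant, and left properness then follows from \cite[Corollary 13.1.3]{hirschhorn2003model} as recalled in the paragraph preceding Theorem \ref{smith existence theorem}. Therefore the localisation $L_S \widehat{\Theta_2}$ exists.

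By definition of a Bousfield localisation (paragraph \ref{bousfield localisation}), the cofibrations of $L_S \widehat{\Theta_2}$ coincide with those of Ara's model structure, hence are exactly the monomorphisms. By the characterisation recalled in paragraph \ref{homotopy mapping space and local objects}, the fibrant objects of $L_S \widehat{\Theta_2}$ are precisely the $S$-local objects of $\widehat{\Theta_2}$, which are 2-quasi-categories that are local with respect to $[\id; \sigma^0]$. By Proposition \ref{locally Kan 2-quasi-categories as local objects}, these are exactly the locally Kan 2-quasi-categories.

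There is no real obstacle here: all the work has already been done in establishing left properness and combinatoriality of Ara's model structure (cited from \cite{ara2014higher}) and in proving Proposition \ref{locally Kan 2-quasi-categories as local objects}. The proof is essentially a direct application of the Smith existence theorem combined with the already-proved characterisation of locally Kan 2-quasi-categories as local objects.
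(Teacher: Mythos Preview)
Your proposal is correct and follows exactly the same approach as the paper: apply Smith's existence theorem (Theorem \ref{smith existence theorem}) to Ara's model structure, which is left proper (all objects cofibrant) and combinatorial, with $S = \{[\id; \sigma^0]\}$, and then identify the fibrant objects via Proposition \ref{locally Kan 2-quasi-categories as local objects}. The paper's proof is simply a terser version of what you wrote.
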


\begin{proof}
Ara's model structure for 2-quasi-categories is left proper and combinatorial, so we can apply Theorem \ref{smith existence theorem} to $S = \{[\id; \sigma^0]: \Theta_2[1;1] \to \Theta_2[1;0]\}$.
\end{proof}

The following theorem of Campbell will be the starting point of the proof of Theorem \ref{Quillen equivalence (infty, 2)-groupoids and spaces}.

\begin{theorem} \label{Quillen equivalence quasi-categories and locally Kan 2-quasi-categories}
The adjunction $$t^*: \widehat{\Delta} \rightleftarrows \widehat{\Theta_2} : i^*$$ is a Quillen equivalence between the category of simplicial sets with Joyal's model structure and the category of 2-cellular sets with the model structure for locally Kan 2-quasi-categories.
\end{theorem}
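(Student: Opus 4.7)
The strategy is to bootstrap from the Quillen adjunction $(t^*, i^*): \widehat{\Delta} \rightleftarrows \widehat{\Theta_2}$ between Joyal's model structure and Ara's model structure (Proposition \ref{Quillen adjunction quasi-categories and 2-quasi-categories}), using that the model structure for locally Kan 2-quasi-categories is a Bousfield localisation of Ara's structure (Proposition \ref{existence of model structure for locally Kan 2-quasi-categories}). The first step, checking that $(t^*, i^*)$ remains a Quillen adjunction once one replaces Ara's model structure by the locally Kan one, is essentially automatic: the cofibrations are unchanged and every acyclic cofibration of Ara's structure remains acyclic in the localisation, so $t^*$ still preserves trivial cofibrations. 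Equivalently, in the spirit of Theorem \ref{criterion homotopy refection Quillen equivalence}(1), every locally Kan 2-quasi-category is in particular a 2-quasi-category, so $i^*$ already lands in quasi-categories (cf.\ \ref{underlying quasi-category}).

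The second step is to show that the Ara-level adjunction is a homotopy reflection. Since every simplicial set is cofibrant in Joyal's structure, $\L t^* = t^*$; and since $t^* \cong i_!$ with $i: \Delta \to \Theta_2$ fully faithful, the underived unit $X \to i^* t^*(X)$ is an isomorphism. Composing it with $i^*$ applied to an Ara-fibrant replacement $t^*(X) \to R t^*(X)$ yields the derived unit $X \to \R i^* \L t^*(X)$, which is a Joyal weak equivalence by Ken Brown's lemma applied to the right Quillen functor $i^*$. Hence $\R i^*$ is fully faithful. Lemma \ref{lemma successive localisations} then identifies $\underline{\Ho \widehat{\Theta_2}}(Y_1, Y_2)$ with $\underline{\Ho \widehat{\Theta_2}_{\loc}}(Y_1, Y_2)$ whenever $Y_2$ is locally Kan, so full faithfulness of $\R i^*$ persists to the localised adjunction.

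The remaining task is essential surjectivity, which I expect to be the main obstacle: for any quasi-category $X$, one takes the locally Kan fibrant replacement $Y := R_{\loc} t^*(X)$ and must show $i^*(Y)$ is Joyal-equivalent to $X$. The cleanest approach is to pass through an Ara-fibrant replacement $R t^*(X)$, which already satisfies $i^*(R t^*(X)) \simeq_{\text{Joyal}} X$ by the homotopy reflection argument above, and then analyse the comparison from $R t^*(X)$ to a locally Kan fibrant replacement of $R t^*(X)$. The delicate point is that a local weak equivalence between 2-quasi-categories need not be an Ara-weak equivalence, so one must control what the localisation does on the underlying quasi-category and on each hom-quasi-category. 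Using Proposition \ref{locally Kan 2-quasi-categories as local objects} and the $(\Sigma, \Hom)$ adjunction (Proposition \ref{Quillen adjunction suspension Hom}), one reduces this to the observation that replacing the hom-quasi-categories by Kan complexes (via the Kan-Quillen localisation of Joyal's structure, cf.\ Example \ref{Kan-Quillen vs Joyal}) does not alter the underlying quasi-category up to Joyal equivalence. Once this is in hand, the dual of Theorem \ref{criterion homotopy refection Quillen equivalence}(2) delivers the desired Quillen equivalence.
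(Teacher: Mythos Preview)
The paper does not give an independent proof of this theorem; its entire proof is the citation ``See \cite[Corollary 11.6]{campbell2020homotopy}''. What you have written is an attempt at an original argument, and it has genuine gaps.

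First, there is a confusion about the meaning of ``homotopy reflection''. By \S\ref{homotopy reflection} this means that $\R i^*$ is fully faithful, i.e.\ that the derived \emph{counit} $t^* i^*(Y) \to Y$ is a weak equivalence for fibrant $Y$. What you actually compute is the derived \emph{unit} $X \to i^*\bigl(R\, t^*(X)\bigr)$; that being an equivalence expresses the dual condition that $\L t^*$ is fully faithful. At the Ara level the adjunction is \emph{not} a homotopy reflection in the paper's sense: the counit $t^* i^*(Y) \to Y$ discards all non-degenerate $2$-cells of a $2$-quasi-category $Y$ and is not an Ara weak equivalence in general. Moreover, even your argument for the derived unit is incomplete: Ken Brown's lemma only says that a right Quillen functor preserves weak equivalences between \emph{fibrant} objects, and $t^*(X)$ is typically not Ara-fibrant, so you cannot conclude from it that $i^*$ carries the trivial cofibration $t^*(X) \to R\, t^*(X)$ to a Joyal equivalence. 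Note also that Theorem~\ref{criterion homotopy refection Quillen equivalence} is formulated for a localisation of the \emph{source} $\m$, whereas here you localise the \emph{target} $\n = \widehat{\Theta_2}$; the ``dual'' you invoke is not a formal consequence.

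Second, your final paragraph is where the real content lies, and it is not proved. Your ``essential surjectivity'' step is again a statement about the derived unit (now for the localised structure), not the counit, so even granting it you would only have re-established what you claimed in step two. More importantly, the key assertion that ``replacing the hom-quasi-categories by Kan complexes \ldots\ does not alter the underlying quasi-category up to Joyal equivalence'' is exactly the substance of Campbell's result; in \cite{campbell2020homotopy} its proof passes through the homotopy-coherent nerve and the comparison with bicategories developed over several sections, and it is not something one can wave through with Proposition~\ref{locally Kan 2-quasi-categories as local objects} and the $(\Sigma, \Hom)$ adjunction alone.
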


\begin{proof}
See \cite[Corollary 11.6]{campbell2020homotopy}.
\end{proof}

We now have all the ingredients to define 2-quasi-groupoids. Recall that a (strict) 2-groupoid is a 2-category where all 1-morphisms and 2-morphisms are (strictly) invertible. Equivalently, a 2-category $\cc$ is a 2-groupoid if and only if:
\begin{enumerate}
    \item $\cc$ is \textit{locally groupoidal}, i.e., for every two objects $x, y$ of $\cc$, the hom-category $\cc(x,y)$ is a groupoid. Equivalently, for every two object $x, y$ of $\cc$, the quasi-category $N(\cc(x,y)) \cong \Hom_{N_2(\cc)}(x,y)$ is a Kan complex (or $\infty$-groupoid);
    \item $t_r(\cc)$ is a groupoid, which amounts to say that $N t_r (\cc) \cong i^* N_2(\cc)$ is a Kan complex.
\end{enumerate}

Inspired by this characterisation, we give the following definition:

\begin{definition} \label{(infty, 2)-groupoid}
A 2-quasi-category $X$ is a \textit{2-quasi-groupoid} if it is locally Kan and its underlying quasi-category $i^*(X)$ is a Kan complex.
\end{definition}

\begin{proposition} \label{(infty, 2)-groupoids as local objects}
A 2-quasi-category $X$ is a 2-quasi-groupoid if and only if $X$ is local with respect to $[\id; \sigma^0]: \Theta_2[1;1] \to \Theta_2[1;0]$ and $[\sigma^0]: \Theta_2[1;0] \to \Theta_2[0]$ in the model structure for 2-quasi-categories.
\end{proposition}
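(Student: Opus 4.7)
The plan is to combine Proposition \ref{locally Kan 2-quasi-categories as local objects}, which already characterizes the locally Kan condition as locality with respect to $[\id; \sigma^0]$, with an analogous characterization of the condition \enquote{$i^*(X)$ is a Kan complex} as locality with respect to $[\sigma^0]$. Since by Definition \ref{(infty, 2)-groupoid} a 2-quasi-groupoid is precisely a 2-quasi-category satisfying both conditions, putting these two characterizations together gives the result.

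For the new half, first I would observe that the morphism $[\sigma^0]: \Theta_2[1;0] \to \Theta_2[0]$ is the image under $t^* = i_!$ of the degeneracy $\sigma^0: \Delta[1] \to \Delta[0]$ in $\widehat{\Delta}$: indeed, $i([1]) = [1;0]$ and $i([0]) = [0]$, and $t^*$ sends representables to representables via $i$. Since all simplicial sets are cofibrant in Joyal's model structure, $\L t^*(\sigma^0)$ may be taken to be $[\sigma^0]$ itself. Now by Proposition \ref{Quillen adjunction quasi-categories and 2-quasi-categories}, $(t^*, i^*)$ is a Quillen adjunction between Joyal's model structure on $\widehat{\Delta}$ and Ara's model structure on $\widehat{\Theta_2}$, so Proposition \ref{local fibrant objects and localisation} applies: a 2-quasi-category $X$ is local with respect to $\{[\sigma^0]\}$ if and only if $i^*(X)$ is local with respect to $\{\sigma^0: \Delta[1] \to \Delta[0]\}$ in Joyal's model structure.

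Second, I would invoke Example \ref{Kan-Quillen vs Joyal}, which says that the Kan-Quillen model structure is the Bousfield localisation of Joyal's model structure at $\Delta[1] \to \Delta[0]$. By the description of local fibrant objects (cf.\ \ref{bousfield localisation} and \ref{homotopy mapping space and local objects}), a quasi-category is local with respect to $\sigma^0$ precisely when it is a Kan complex. Combining this with the previous step, we conclude that a 2-quasi-category $X$ is local with respect to $[\sigma^0]$ if and only if $i^*(X)$ is a Kan complex. Together with Proposition \ref{locally Kan 2-quasi-categories as local objects}, this gives the equivalence.

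The argument is essentially a transfer of localisation, so the only point requiring care is identifying $[\sigma^0]$ with $\L t^*(\sigma^0)$; this is straightforward because $t^*$ preserves the relevant representables and every simplicial set is cofibrant. No real obstacle is expected.
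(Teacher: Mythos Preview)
Your proposal is correct and follows exactly the same approach as the paper: reduce to Proposition~\ref{locally Kan 2-quasi-categories as local objects} for the first map, then identify $[\sigma^0]$ with $t^*(\Delta[1] \to \Delta[0])$ and apply Proposition~\ref{local fibrant objects and localisation} to the Quillen adjunction $(t^*, i^*)$ of Proposition~\ref{Quillen adjunction quasi-categories and 2-quasi-categories}, using Example~\ref{Kan-Quillen vs Joyal} to conclude. Your extra remark about cofibrancy ensuring $\L t^*(\sigma^0) = [\sigma^0]$ is a welcome detail the paper leaves implicit.
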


\begin{proof}
By Proposition \ref{locally Kan 2-quasi-categories as local objects}, it suffices to show that $i^*(X)$ is a Kan complex if and only if $X$ is local with respect to $[\sigma^0]: \Theta_2[1;0] \to \Theta_2[0]$. This follows from Proposition \ref{local fibrant objects and localisation} applied to the Quillen adjunction $t^*: \widehat{\Delta} \rightleftarrows \widehat{\Theta_2}: i^*$ of Proposition \ref{Quillen adjunction quasi-categories and 2-quasi-categories}, by noticing that $[\sigma^0]: \Theta_2[1;0] \to \Theta_2[0] = t^*(\Delta[1] \to \Delta[0])$ and that Kan complexes are precisely quasi-categories which are local with respect to $\Delta[1] \to \Delta[0]$ in Joyal's model structure.
\end{proof}

Exactly as in Proposition \ref{existence of model structure for locally Kan 2-quasi-categories}, we can get a \textit{model structure for 2-quasi-groupoids}.

\begin{proposition} \label{existence of model structure for (infty, 2)-groupoids}
There exists a model structure on the category of 2-cellular sets whose cofibrations are monomorphisms, and whose fibrant objects are 2-quasi-groupoids. This model structure is a Bousfield localisation of Ara's model structure with respect to $[\id; \sigma^0]: \Theta_2[1;1] \to \Theta_2[1;0]$ and $[\sigma^0]: \Theta_2[1;0] \to \Theta_2[0]$. \qed
\end{proposition}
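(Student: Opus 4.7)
The proof plan is to imitate verbatim the argument given for Proposition \ref{existence of model structure for locally Kan 2-quasi-categories}, replacing the singleton set of morphisms with the two-element set
$$S = \{[\id; \sigma^0]: \Theta_2[1;1] \to \Theta_2[1;0],\ [\sigma^0]: \Theta_2[1;0] \to \Theta_2[0]\}.$$
The first step is to observe that Ara's model structure on $\widehat{\Theta_2}$ satisfies the hypotheses of Theorem \ref{smith existence theorem}: it is combinatorial (as established in \cite{ara2014higher}) and left proper, the latter because every object is cofibrant, as noted in \ref{Ara's model structure}.

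Next, I would apply Theorem \ref{smith existence theorem} to the set $S$ above. This yields the Bousfield localisation $L_S \widehat{\Theta_2}$, which is itself left proper and combinatorial. Since Bousfield localisations preserve cofibrations, the cofibrations of $L_S \widehat{\Theta_2}$ coincide with those of Ara's model structure, namely the monomorphisms of 2-cellular sets. By definition of the localisation with respect to $S$, the fibrant objects of $L_S \widehat{\Theta_2}$ are exactly the $S$-local 2-quasi-categories.

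Finally, I would invoke Proposition \ref{(infty, 2)-groupoids as local objects} to identify the $S$-local 2-quasi-categories with the 2-quasi-groupoids. This produces the desired model structure and simultaneously exhibits it as a Bousfield localisation of Ara's model structure with respect to the two morphisms listed in the statement.

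There is no genuine obstacle in this argument: all nontrivial content has already been packaged into Smith's existence theorem and into Proposition \ref{(infty, 2)-groupoids as local objects}. The only point requiring minor care is that Smith's theorem applies to a \emph{set} of morphisms, which is harmless here since $S$ is finite.
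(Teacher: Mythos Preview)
Your proposal is correct and matches the paper's approach exactly: the paper simply states ``Exactly as in Proposition \ref{existence of model structure for locally Kan 2-quasi-categories}'' and marks the proposition with \qed, which is precisely the argument you have spelled out---apply Theorem \ref{smith existence theorem} to the two-element set $S$ and identify the $S$-local objects via Proposition \ref{(infty, 2)-groupoids as local objects}.
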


We finish this section by showing that this model structure is Quillen equivalent to the Kan-Quillen model structure on simplicial sets.

\begin{theorem} \label{Quillen equivalence (infty, 2)-groupoids and spaces}
The adjunction $$t^*: \widehat{\Delta} \rightleftarrows \widehat{\Theta_2} : i^*$$ is a Quillen equivalence between the category of simplicial sets with the Kan-Quillen model structure and the category of 2-cellular sets with the model structure for 2-quasi-groupoids.
\end{theorem}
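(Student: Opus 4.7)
The plan is to obtain this Quillen equivalence as a localisation of the Quillen equivalence in Theorem \ref{Quillen equivalence quasi-categories and locally Kan 2-quasi-categories}, using the machinery of Section 1. The strategy relies on the observation that the model structure for 2-quasi-groupoids can be realised as a localisation of the model structure for locally Kan 2-quasi-categories, while the Kan-Quillen model structure is a localisation of Joyal's model structure (Example \ref{Kan-Quillen vs Joyal}); the map $[\sigma^0] : \Theta_2[1;0] \to \Theta_2[0]$ being precisely $t^*(\sigma^0 : \Delta[1] \to \Delta[0])$ is what will match the two localising sets.

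First I would apply Proposition \ref{proposition succesive localisations} to identify the model structure for 2-quasi-groupoids with the localisation of the model structure for locally Kan 2-quasi-categories with respect to the single morphism $[\sigma^0] : \Theta_2[1;0] \to \Theta_2[0]$. Indeed, Proposition \ref{existence of model structure for (infty, 2)-groupoids} describes this model structure as $L_{\{[\id;\sigma^0],\, [\sigma^0]\}} \widehat{\Theta_2}$ (starting from Ara's model structure), and the locally Kan model structure is $L_{\{[\id;\sigma^0]\}} \widehat{\Theta_2}$. All relevant localisations exist by Theorem \ref{smith existence theorem}, since Ara's model structure is left proper and combinatorial and localising preserves these properties, so Proposition \ref{proposition succesive localisations} applies.

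Next, since every simplicial set is cofibrant in Joyal's model structure, we have $\L t^* = t^*$, and in particular $\L t^*(\sigma^0 : \Delta[1] \to \Delta[0]) = [\sigma^0] : \Theta_2[1;0] \to \Theta_2[0]$. Applying Theorem \ref{transfer of localisation} to the Quillen equivalence of Theorem \ref{Quillen equivalence quasi-categories and locally Kan 2-quasi-categories} and the class $S = \{\sigma^0 : \Delta[1] \to \Delta[0]\}$ therefore yields a Quillen equivalence
$$t^* : L_S \widehat{\Delta} \rightleftarrows L_{\L t^*(S)} \widehat{\Theta_2} : i^*,$$
where on the left we start from Joyal's model structure and on the right from the model structure for locally Kan 2-quasi-categories.

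Finally, it remains to identify the two sides of this equivalence. On the simplicial side, $L_S$ of Joyal's model structure is the Kan-Quillen model structure by Example \ref{Kan-Quillen vs Joyal}. On the 2-cellular side, $L_{\L t^*(S)}$ of the locally Kan model structure is, by the first step, exactly the model structure for 2-quasi-groupoids. Combining these identifications gives the desired Quillen equivalence. The main point to verify carefully is the applicability of Proposition \ref{proposition succesive localisations} (existence of all the intermediate localisations), but this is automatic from Theorem \ref{smith existence theorem} since each localisation is taken with respect to a single morphism in a left proper combinatorial model category.
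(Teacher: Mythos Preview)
Your proposal is correct and follows essentially the same approach as the paper: start from Campbell's Quillen equivalence (Theorem \ref{Quillen equivalence quasi-categories and locally Kan 2-quasi-categories}), transfer the localisation at $\sigma^0:\Delta[1]\to\Delta[0]$ via Theorem \ref{transfer of localisation}, and then use Proposition \ref{proposition succesive localisations} to identify the resulting localisation of the locally Kan model structure with the model structure for 2-quasi-groupoids. The only cosmetic difference is the order in which you perform the identification and the transfer, but the content and the cited results match the paper's proof exactly.
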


\begin{proof}
By theorem Theorem \ref{Quillen equivalence quasi-categories and locally Kan 2-quasi-categories}, the adjunction above is a Quillen equivalence between the model structures for quasi-categories and for locally Kan 2-quasi-categories. We know that the Kan-Quillen model structure is a localisation of Joyal's model structure with respect to $\Delta[1] \to \Delta[0]$. Applying Theorem \ref{transfer of localisation} to the adjunction in question, we obtain a Quillen equivalence between the Kan-Quillen model structure and the localisation of the model structure for locally Kan 2-quasi-categories with respect to $\L t^*(\Delta[1] \to \Delta[0]) = (\Theta_2[1;0] \to \Theta_2[0])$ (which exists by Theorem \ref{smith existence theorem} since the model structure for locally Kan 2-quasi-categories is left proper and combinatorial - also by Theorem \ref{smith existence theorem}).

It remains to show that this localisation is precisely the model structure for 2-quasi-groupoids. This follows immediately by Propositions \ref{proposition succesive localisations}, \ref{existence of model structure for locally Kan 2-quasi-categories} and \ref{existence of model structure for (infty, 2)-groupoids}.
\end{proof}

\section{Campbell's nerve for bicategories and 2-truncated 2-quasi-categories}

We have seen in Proposition \ref{When the strict 2-nerve of a 2-category is a 2-quasi-category} that, unlike the case of quasi-categories, the strict 2-nerve of a 2-category is not always a 2-quasi-category. In \cite{campbell2020homotopy} is built a homotopy coherent nerve $N_h: \Bicat \to \widehat{\Theta_2}$, with the property that $N_h(\cc)$ is a 2-quasi-category for every bicategory $\cc$. In this section, we recall the definition of the nerve $N_h$ and how it induces a Quillen equivalence between Lack's model structure for bicategories (described in \cite{lack2004quillen}) and a model structure for 2-truncated 2-quasi-categories (see Definition \ref{Definition of 2-truncated 2-quasi-categories}). 

All the results of this section are due to Campbell. We present some proofs in order to show that the techniques used in section \ref{section 2-truncated 2-quasi-groupoids vs 2-groupoids} are the same as those of \cite{campbell2020homotopy}.

\begin{parag} \label{bicategories and normal pseudofunctors}
We will write $\Bicat$ for the category of bicategories and normal pseudo-functors (i.e., pseudo-functors which preserve identities strictly and preserve composition of 1-morphisms up to an invertible 2-morphism) and $\Bicat_s$ for the the category of bicategories and strict functors. Recall that $\twocat$ is the category of (strict) 2-categories and (strict) 2-functors. We have inclusions:
$$\twocat \to \Bicat_s \to \Bicat$$ where only the first one is fully faithful. Both inclusion have left adjoints, which we denote by $\lambda: \Bicat_s \to \twocat$ and $Q: \Bicat \to \Bicat_s$. We write $\st$ for the composite functor $\st := \lambda Q: \Bicat \to \twocat$.

An important feature of the categories $\twocat$ and $\Bicat_s$ is that they are both complete and cocomplete, which is not the case for $\Bicat$ (it is nether complete nor cocomplete).
\end{parag}

\begin{definition} \label{homotopy coherent nerve}
The \textit{homotopy coherent nerve} $N_h: \Bicat \to \widehat{\Theta_2}$ is the nerve functor (cf. §\ref{left Kan extension and nerve/singular functor}) associated to the inclusion $\Theta_2 \to \twocat \to \Bicat$.
\end{definition}

As for the strict 2-nerve, the homotopy coherent nerve $N_h: \Bicat \to \widehat{\Theta_2}$ is fully faithful \cite[Theorem 3.18]{campbell2020homotopy}. We also denote by $N_h$ the restriction of this functor along the inclusions $\twocat \to \Bicat_s \to \Bicat$. Note that $N_h: \twocat \to \widehat{\Theta_2}$ is still faithful, but is no longer full.

Explicitly, for a 2-category $\cc$ and $[n; \mathbf{q}] \in \Theta_2$, we have 
$$N_h (\cc)_{n; \mathbf{q}} = \Hom_{\Bicat}([n; \mathbf{q}], \cc) \cong \Hom_{\Bicat_s}(Q[n; q], \cc) \cong  \Hom_{\twocat}(\st [n; \mathbf{q}], \cc)$$

\begin{parag} \label{definition of tau_b}
The restriction of the nerve $N_h: \Bicat \to \widehat{\Theta_2}$ to $\Bicat_s$ is isomorphic to the nerve functor $\Bicat_s \to \widehat{\Theta_2}$ induced by the composition of the inclusion $\Theta_2 \to \Bicat$ with the functor $Q: \Bicat \to \Bicat_s$. Since $\Bicat_s$ is a cocomplete category, we have (see §\ref{left Kan extension and nerve/singular functor}) that $N_h: \Bicat_s \to \widehat{\Theta_2}$ is the right functor of an adjunction
$$\tau_b: \widehat{\Theta_2} \rightleftarrows \Bicat_s: N_h$$
where $\tau_b$ is obtained by left Kan extension of $Q: \Theta_2 \to \Bicat_s$ along the Yoneda embedding.

\end{parag}

\begin{parag}\label{Model category structures on 2-categories and bicategories}
A pseudo-functor $F: \cc \to \d$ between two bicategories is a \textit{biequivalence} if \begin{enumerate}
    \item $F$ is biessentially surjective on objects (i.e., if for every object $y$ of $\d$, there is an object $x$ of $\cc$ such that $F(x)$ is equivalent\footnote{A morphism $f: x \to y$ in a bicategory is an \textit{equivalence} if there exists a morphism $g: y \to x$ and invertible 2-morphisms $\alpha: gf \Rightarrow \id_x$ and $\beta: fg \Rightarrow \id_y$. Two objects of a bicategory are said to be \textit{equivalent} if there is an equivalence between then.} to $y$, and
        \item $F$ is locally an equivalence of categories (i.e., for every objects $x, x' \in \cc$, the functor $F_{x,x'}: \Hom_{\cc}(x,x') \to \Hom_{\d}(F x, F x')$ is an equivalence of categories)
\end{enumerate}

The categories $\Bicat_s$ and $\twocat$ are endowed with model category structures, described in \cite{lack2002quillen} and \cite{lack2004quillen}, whose weak equivalences are biequivalences and for which all objects are fibrant. The model structure on $\twocat$ is right-transferred by the one on $\Bicat_s$ via the adjunction $\lambda: \Bicat_s \rightleftarrows \twocat$ of §\ref{bicategories and normal pseudofunctors}. Moreover, this adjunction is a Quillen equivalence between both model categories. It will be useful to know hat the components at every object of the unit and the counit of the adjunction $\st: \Bicat \rightleftarrows \twocat$ are biequivalences (See \cite[§4.8]{campbell2020homotopy}). 

\end{parag}

\begin{theorem}\label{Quillen adjunction 2-quasi-categories and bicategories}
The adjunction
$$\tau_b: \widehat{\Theta_2} \rightleftarrows \Bicat_s: N_h$$
is a Quillen adjunction between the category of 2-cellular sets with Ara's model structure for 2-quasi-categories and $\Bicat_s$ with Lacks's model structure for bicategories. Moreover, it is a homotopy reflection.
\end{theorem}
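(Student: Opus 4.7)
The plan is to establish the two claims separately. For the Quillen adjunction, I would show that $\tau_b$ preserves cofibrations and trivial cofibrations. Cofibrations in $\widehat{\Theta_2}$ are the monomorphisms, generated under pushout, transfinite composition and retract by the boundary inclusions $\delta_{n;\mathbf{q}}$; since $\tau_b$ is a left adjoint, it suffices to check that each $\tau_b(\delta_{n;\mathbf{q}})$ is a cofibration in $\Bicat_s$. By construction of $\tau_b$ as the left Kan extension of $Q: \Theta_2 \to \Bicat_s$ along the Yoneda embedding, one has $\tau_b(\Theta_2[n;\mathbf{q}]) \cong Q[n;\mathbf{q}]$, and $\tau_b(\partial \Theta_2[n;\mathbf{q}])$ is the corresponding boundary, whose inclusion is a cofibration in Lack's model structure. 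For the trivial cofibrations, one may either check directly on the generating trivial cofibrations of Ara's model structure, or proceed indirectly: use Campbell's result recalled just before the theorem that $N_h(\mathcal{B})$ is always a 2-quasi-category, so that $N_h$ preserves fibrant objects; then, using the fact that every object of $\Bicat_s$ is fibrant, one tests the lifting property of $\tau_b(j)$ against fibrations of $\Bicat_s$ by transposing to $j$ versus $N_h$ of such a fibration, which lies between 2-quasi-categories.

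For the homotopy reflection, I need the right-derived functor $\R N_h : \Ho(\Bicat_s) \to \Ho(\widehat{\Theta_2})$ to be fully faithful; since every bicategory is fibrant and every 2-cellular set is cofibrant, this is equivalent to showing that the counit $\varepsilon_\mathcal{B} : \tau_b N_h \mathcal{B} \to \mathcal{B}$ is a biequivalence for every $\mathcal{B} \in \Bicat_s$. I would compute $\tau_b N_h \mathcal{B}$ as a coend over $\Theta_2$ of the strictifications $Q[n;\mathbf{q}]$ indexed by the cells $N_h(\mathcal{B})_{n;\mathbf{q}} = \Hom_{\Bicat}([n;\mathbf{q}], \mathcal{B})$, and then compare this coend with the strictification $\st \mathcal{B}$ of §\ref{bicategories and normal pseudofunctors}. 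Since the counit $\st \mathcal{B} \to \mathcal{B}$ of the adjunction $\st \dashv (\text{inclusion})$ between $\Bicat$ and $\twocat$ is a biequivalence (§\ref{Model category structures on 2-categories and bicategories}), and both $\tau_b N_h \mathcal{B}$ and $\st \mathcal{B}$ receive canonical comparison maps covering $\varepsilon_\mathcal{B}$, the problem should reduce to identifying this comparison as a biequivalence; one expects a natural factorisation of $\varepsilon_\mathcal{B}$ through $\st \mathcal{B} \to \mathcal{B}$ by universal property.

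The main obstacles I expect are: (a) verifying that $\tau_b$ preserves trivial cofibrations, since the trivial cofibrations of Ara's model structure are not amenable to a simple combinatorial description, so the indirect route via fibrations between 2-quasi-categories is likely the right path but requires a solid handle on Ara's generating trivial cofibrations and their strictifications; and (b) the identification of the counit $\varepsilon_\mathcal{B}: \tau_b N_h \mathcal{B} \to \mathcal{B}$ with (up to biequivalence) the strictification counit $\st \mathcal{B} \to \mathcal{B}$, which requires a coend-theoretic argument showing that left Kan extending along $N_h$ recovers the strictification functor up to equivalence. Both obstacles ultimately reduce to a careful understanding of how the functor $Q: \Theta_2 \to \Bicat_s$ and its extension $\tau_b$ interact with the higher-categorical structure encoded by $N_h$.
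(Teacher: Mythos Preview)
The paper does not give its own proof of this theorem: it simply cites \cite[Theorem 5.10]{campbell2020homotopy}. So there is no argument in the paper to compare against; what follows is an assessment of your sketch on its own terms.

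Your plan for the Quillen adjunction contains a genuine gap. In the ``indirect route'' you propose, you transpose a lifting problem for $\tau_b(j)$ against a fibration $p$ in $\Bicat_s$ to a lifting problem for $j$ against $N_h(p)$, and then observe that $N_h(p)$ lies between 2-quasi-categories because $N_h$ sends every bicategory to a 2-quasi-category. But a map between fibrant objects need not be a fibration, and the trivial cofibration $j$ only has the left lifting property against \emph{fibrations} of Ara's model structure. What you actually need is that $N_h$ preserves fibrations (equivalently, by the Joyal-type lemma you allude to, fibrations between fibrant objects, which in $\Bicat_s$ is all of them). Establishing this is real work: Campbell's proof uses his explicit description of the fibrations in Ara's model structure via lifting conditions against spine and equivalence extensions, and checks these liftings for $N_h(p)$ directly from Lack's description of fibrations of bicategories. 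Your sketch does not engage with this, and the step as written does not go through.

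For the homotopy reflection, your idea of comparing $\tau_b N_h \mathcal{B}$ with $\st\mathcal{B}$ is along the right lines, and is close to what Campbell does; but as you yourself flag in obstacle~(b), the coend identification is where the content lies, and nothing in your outline indicates how it would be carried out. So this part is a plausible strategy rather than a proof.
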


\begin{proof}
See \cite[Theorem 5.10]{campbell2020homotopy}.
\end{proof}

\begin{parag} \label{Homotopy coherent nerve as a fibrant replacement}
Theorem \ref{Quillen adjunction 2-quasi-categories and bicategories} implies that the homotopy coherent nerve $N_h(\cc)$ of every bicategory $\cc$ is a 2-quasi-category. Therefore, we have indeed a solution for the "problem" of the strict nerve described in the beginning of this section. Moreover, since every strict 2-functor is a normal pseudo-functor, there is an inclusion $N_2(\cc) \to N_h(\cc)$ for every 2-category $\cc$. Campbell shows that this inclusion is a weak equivalence of Ara \cite[Theorem 10.10]{campbell2020homotopy}, which exhibits $N_h(\cc)$ as a fibrant replacement of $N_2(\cc)$.
\end{parag}

\begin{parag} \label{Homotopy bicategory of a 2-quasi-category}
Since the homotopy coherent nerve of a bicategory is a 2-quasi-category, the functor $N_h: \Bicat \to \widehat{\Theta_2}$ factors through the full subcategory $\twoqcat$ of $\widehat{\Theta_2}$ formed by 2-quasi-categories. The functor $N_h: \Bicat \to \twoqcat$ has a left adjoint, denote by $\Ho: \twoqcat \to \Bicat$. For a 2-quasi-category $X$, we call $\Ho(X)$ the \textit{homotopy bicategory} associated to $X$. The objects of $\Ho(X)$ are the same as those of $X$, and its hom-categories are given by $\Ho(X)(x,y) = \ho(\Hom_X(x,y))$, for every $x,y \in X_0$ (where $\ho(Y)$ denotes the homotopy category of a quasi-category $Y$) \cite[§6.26]{campbell2020homotopy}. A 1-morphism of $\Ho(X)$ is an equivalence if and only if it is invertible in the underlying quasi-category $i^*(X)$ \cite[Proposition 7.8]{campbell2020homotopy}.

By considering the adjunctions of §\ref{bicategories and normal pseudofunctors} and §\ref{definition of tau_b}, we have that for every 2-quasi-category $X$ and every bicategory $\cc$, 
$$\Hom_{\Bicat_s}(Q \Ho(X), \cc) \cong \Hom_{\Bicat}(\Ho(X), \cc) \cong \Hom_{\widehat{\Theta_2}}(X, N_h(\cc)) \cong \Hom_{\Bicat_s}(\tau_b X, \cc)$$

By the Yoneda lemma, the bicategories $Q \Ho(X)$ and $\tau_b X$ are isomorphic. By applying the functor $\lambda$, we obtain an isomorphism $\st \Ho(X) \cong \lambda \tau_b X$.
\end{parag}

We can now give an alternative characterisation of 2-quasi-groupoids. Recall that a \textit{bigroupoid} is a bicategory where every 1-morphism is an equivalence and every 2-morphism is (strictly invertible).

\begin{proposition} \label{characterisation of (infty, 2)-groupoids as bigroupoids}
A 2-quasi-category $X$ is a 2-quasi-groupoid if and only if the 2-category $\lambda \tau_b (X)$ is a bigroupoid.
\end{proposition}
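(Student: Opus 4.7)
The plan is to reduce the statement about $\lambda \tau_b(X)$ to a statement about the homotopy bicategory $\Ho(X)$, via the isomorphism $\st \Ho(X) \cong \lambda \tau_b X$ established at the end of §\ref{Homotopy bicategory of a 2-quasi-category}. The component at $\Ho(X)$ of the unit of the adjunction $\st : \Bicat \rightleftarrows \twocat : i$ is a biequivalence (§\ref{Model category structures on 2-categories and bicategories}), and the composite $\Ho(X) \to \st\Ho(X) \cong \lambda\tau_b X$ is then a biequivalence of bicategories. Since being a bigroupoid -- a bicategory in which every 1-morphism is an equivalence and every 2-morphism is invertible -- is invariant under biequivalence (hom-categories of biequivalent objects are equivalent, hence groupoidal together, and equivalences can be transported along the essentially surjective action on objects), it suffices to show that $\Ho(X)$ is a bigroupoid if and only if $X$ is a 2-quasi-groupoid.

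Next I would unpack the two defining conditions for $\Ho(X)$ to be a bigroupoid. First, $\Ho(X)$ is locally a groupoid iff for every $x,y \in X_0$ the category $\ho(\Hom_X(x,y))$ is a groupoid. Since $\Hom_X(x,y)$ is a quasi-category (§\ref{hom quasi-category}), and a quasi-category $C$ has $\ho(C)$ a groupoid precisely when $C$ is a Kan complex, this condition is equivalent to $X$ being locally Kan. Second, every 1-morphism of $\Ho(X)$ is an equivalence iff, by \cite[Proposition 7.8]{campbell2020homotopy}, every 1-morphism of $X$ is invertible in the underlying quasi-category $i^*(X)$; equivalently, every 1-simplex of the quasi-category $i^*(X)$ is invertible in $\ho(i^*(X))$, which is Joyal's criterion for $i^*(X)$ to be a Kan complex.

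Combining, $\Ho(X)$ is a bigroupoid iff $X$ is locally Kan and $i^*(X)$ is a Kan complex, which is exactly the Definition \ref{(infty, 2)-groupoid} of a 2-quasi-groupoid. Transporting the equivalence back across the biequivalence $\Ho(X) \simeq \lambda\tau_b X$ proves both implications.

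The main obstacle, such as it is, lies in the ``transfer of bigroupoid along biequivalence'' step: one must verify that if $F: \cc \to \d$ is a biequivalence and $\cc$ is a bigroupoid, then so is $\d$ (and conversely). For 2-morphisms this is immediate from the local equivalence, while for 1-morphisms one uses that an equivalence composed on either side with an equivalence is again an equivalence, together with biessential surjectivity to reduce an arbitrary 1-morphism of $\d$ to one in the image of $F$ up to equivalence. This is a standard bicategorical argument that I would state rather than belabor.
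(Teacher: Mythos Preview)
Your proof is correct and follows essentially the same route as the paper's own argument: both use the isomorphism $\lambda\tau_b X \cong \st\Ho(X)$ from §\ref{Homotopy bicategory of a 2-quasi-category} together with the biequivalence $\Ho(X)\to\st\Ho(X)$ from §\ref{Model category structures on 2-categories and bicategories} to reduce to $\Ho(X)$, then identify the two bigroupoid conditions on $\Ho(X)$ with the locally-Kan and Kan-underlying-quasi-category conditions via $\Ho(X)(x,y)=\ho(\Hom_X(x,y))$ and \cite[Proposition 7.8]{campbell2020homotopy}. The only cosmetic difference is that the paper treats the two conditions separately and invokes biequivalence-invariance in each case, whereas you package ``bigroupoid is biequivalence-invariant'' as a single preliminary step.
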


\begin{proof}
Let $X$ be a 2-quasi-category. We show that:
\begin{enumerate}
    \item $i^*(X)$ is a Kan complex if and only if every 1-morphism of $\lambda \tau_b (X)$ is an equivalence, and
    \item $X$ is locally Kan if and only if every 2-morphism of $\lambda \tau_b (X)$ is invertible.
\end{enumerate}

We have that $i^*(X)$ is a Kan complex if and only if every 1-morphism of $\Ho(X)$ is an equivalence, cf. §\ref{Homotopy bicategory of a 2-quasi-category}. We know that $\lambda \tau_b(X) \cong \st (\Ho(X))$ and that the unit $\Ho(X) \to \st(\Ho(X))$ is a biequivalence by §\ref{Model category structures on 2-categories and bicategories}. Note that if $\cc \to \d$ is a biequivalence between bicategories, then every morphism of $\cc$ is an equivalence if and only if every morphism of $\d$ is an equivalence. This shows (1).

For (2), recall that for every $x,y \in X_0$, we have $\ho(\Hom_X(x,y)) = \Ho(X)(x,y)$, so $X$ is locally Kan if and only if the homotopy bicategory $\Ho(X)$ is locally groupoidal. Since a bicategory biequivalent to a locally groupoidal bicategory is also locally groupoidal, we have that $\lambda \tau_b(X) \cong \st (\Ho(X))$ is locally groupoidal. This is equivalent to saying that every 2-morphism of $\lambda \tau_b (X)$ is invertible.
\end{proof}

\begin{parag}\label{Definition of 2-truncated 2-quasi-categories}
Recall from \cite{joyal2008notes, campbell2020truncated} that a quasi-category $X$ is said to be \textit{1-truncated} if the morphism $X \to N(\ho(X))$ is a weak categorical equivalence. Following \cite{campbell2020homotopy}, a 2-quasi-category $X$ is \textit{2-truncated} if, for every $x,y \in X_0$, the hom-quasi-category $\Hom_X(x,y)$ is 1-truncated.
\end{parag}

2-truncated 2-quasi-categories can be characterised as local objects.

\begin{proposition}\label{2-truncated 2-quasi-categories as local objects}
A 2-quasi-category $X$ is 2-truncated if and only if $X$ is local with respect to the boundary inclusion $\delta_{1;3}: \partial \Theta_2[1;3] \to \Theta_2[1;3]$ in the model structure for 2-quasi-categories.
\end{proposition}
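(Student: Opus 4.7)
The plan is to mirror the proof of Proposition \ref{locally Kan 2-quasi-categories as local objects}, replacing the degeneracy $[\id;\sigma^0]$ by the boundary inclusion $\delta_{1;3}$. The key input is to exhibit $\delta_{1;3}$, viewed in the coslice $\partial\Theta_2[1;0]/\widehat{\Theta_2}$ with its natural basepoints $(0,1)$, as $\Sigma$ applied to the simplicial boundary inclusion $\partial\Delta[3]\to\Delta[3]$. The rest of the argument is then formal: apply the ``pointed trick'' used in Proposition \ref{locally Kan 2-quasi-categories as local objects} together with Proposition \ref{local fibrant objects and localisation} for the Quillen adjunction $(\Sigma,\Hom)$ of Proposition \ref{Quillen adjunction suspension Hom}, and invoke the characterisation (from \cite{campbell2020truncated}) of 1-truncated quasi-categories as those quasi-categories local with respect to $\partial\Delta[3]\to\Delta[3]$ in Joyal's model structure.

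The identification $\Sigma(\partial\Delta[n]\to\Delta[n])\cong\delta_{1;n}$ can be established as follows. Since $\Sigma$ is a left adjoint, it preserves colimits; as $\partial\Delta[n]$ is the union of the $n+1$ face inclusions $\Delta[n-1]\to\Delta[n]$ (a connected colimit in $\widehat{\Delta}$), $\Sigma(\partial\Delta[n])$ is the union inside $(\Theta_2[1;n],0,1)$ of the images of the maps $\Sigma(\delta^i)=[\id;\delta^i]:(\Theta_2[1;n-1],0,1)\to(\Theta_2[1;n],0,1)$. It remains to check that this union is exactly $\partial\Theta_2[1;n]$, which amounts to showing that every non-surjective morphism $[\alpha;\boldsymbol{\alpha}]:[m;\mathbf{p}]\to[1;n]$ factors either through the basepoint $\partial\Theta_2[1;0]\hookrightarrow(\Theta_2[1;n],0,1)$ or through one of the face inclusions $[\id;\delta^i]$. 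If $\alpha:[m]\to[1]$ is not surjective, then $\alpha$ is constant and $[\alpha;\boldsymbol{\alpha}]$ factors through the corresponding vertex; if $\alpha$ is surjective, then there is a unique index $i$ with $\alpha(i-1)<\alpha(i)$ and the only non-trivial component $\alpha_1:[p_i]\to[n]$ must be non-surjective, hence factors through some $\delta^k:[n-1]\to[n]$, so $[\alpha;\boldsymbol{\alpha}]$ factors through $[\id;\delta^k]$.

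With the identification in hand, the conclusion follows as in Proposition \ref{locally Kan 2-quasi-categories as local objects}: a 2-quasi-category $X$ is local with respect to $\delta_{1;3}$ in Ara's model structure if and only if for every $x,y\in X_0$ the pointed object $(X,x,y)$ is local with respect to $\delta_{1;3}=\Sigma(\partial\Delta[3]\to\Delta[3])$ in the induced model structure on $\partial\Theta_2[1;0]/\widehat{\Theta_2}$; by Proposition \ref{local fibrant objects and localisation} this is equivalent to the hom-quasi-category $\Hom_X(x,y)$ being local with respect to $\partial\Delta[3]\to\Delta[3]$ in Joyal's model structure, i.e., to $\Hom_X(x,y)$ being 1-truncated, which is precisely the definition of $X$ being 2-truncated. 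The main obstacle is the colimit computation of the second paragraph; every other step is either a direct adaptation of Proposition \ref{locally Kan 2-quasi-categories as local objects} or a formal invocation of results already recorded in the preceding sections.
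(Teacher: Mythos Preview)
Your proposal is correct and follows exactly the approach of the paper: the paper's proof also reduces to the fact that 1-truncated quasi-categories are the quasi-categories local with respect to $\partial\Delta[3]\to\Delta[3]$ in Joyal's model structure, invokes the identification $\delta_{1;3}=\Sigma(\delta_3)$, and applies Proposition \ref{local fibrant objects and localisation} to the Quillen adjunction $(\Sigma,\Hom)$ (the pointed reduction being implicit, exactly as in Proposition \ref{locally Kan 2-quasi-categories as local objects}). The only difference is that you spell out the colimit computation showing $\Sigma(\partial\Delta[n]\to\Delta[n])\cong\delta_{1;n}$, whereas the paper simply asserts this identification; your verification is correct and is a useful addition.
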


\begin{proof}
A quasi-category is 1-truncated if and only if it is local with respect to the boundary inclusion $\delta_3: \partial \Delta[3] \to \Delta[3]$ in Joyal's model structure (see \cite[Proposition 3.23]{campbell2020truncated}). The result follows from Proposition \ref{local fibrant objects and localisation}, since $(\delta_{1;3}: \partial \Theta_2[1;3] \to \Theta_2[1;3]) = \Sigma(\delta_3: \partial \Delta[3] \to \Delta[3])$.
\end{proof}

\begin{parag} \label{model structure for 2-truncated 2-quasi-categories}
In view of the previous proposition, we can use Theorem \ref{smith existence theorem} once again to produce a Bousfield localisation of Ara's model structure, whose fibrant objects are the 2-truncated 2-quasi-categories. We call it the \textit{model structure for 2-truncated 2-quasi-categories}.
\end{parag}

The following theorem allows us to show that the Quillen adjunction of Theorem \ref{Quillen adjunction 2-quasi-categories and bicategories} becomes a Quillen equivalence after localizing Ara's model structure to obtain the model structure for 2-truncated 2-quasi-categories.

\begin{theorem} \label{2-truncated 2-quasi-categories are nerves of bicategories}
A 2-quasi-category $X$ is 2-truncated if and only if the unit $X \to N_h(\Ho(X))$ of the adjunction $\Ho: \twoqcat \rightleftarrows \Bicat: N_h$ is a weak equivalence of Ara.
\end{theorem}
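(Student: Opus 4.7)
Both directions rest on identifying the unit $\eta_X : X \to N_h(\Ho(X))$ on $0$-cells and on hom-quasi-categories. By §\ref{Homotopy bicategory of a 2-quasi-category}, $\Ho(X)$ has the same objects as $X$, so $\eta_X$ is the identity on $0$-cells. From Definition \ref{homotopy coherent nerve} one extracts a natural isomorphism $\Hom_{N_h(\cc)}(x,y) \cong N(\cc(x,y))$ for any bicategory $\cc$ (a normal pseudofunctor $[1;n] \to \cc$ is exactly the datum of two objects and a functor $[n] \to \cc(x,y)$). Combined with $\Ho(X)(x,y) = \ho(\Hom_X(x,y))$, this identifies the hom-map induced by $\eta_X$ with the unit
\[\Hom_X(x,y) \longrightarrow N(\ho(\Hom_X(x,y)))\]
of the adjunction $\ho \dashv N$ between simplicial sets and categories.

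\textbf{The direction $(\Leftarrow)$ is easy.} Each $N(\ho(\Hom_X(x,y)))$ is the nerve of a category, hence a $1$-truncated quasi-category (the $\ho \dashv N$ unit at any nerve is an isomorphism). Therefore $N_h(\Ho(X))$ is always $2$-truncated. By Proposition \ref{2-truncated 2-quasi-categories as local objects}, $2$-truncatedness is equivalent to $\delta_{1;3}$-locality in Ara's model structure, and this locality---being defined via the homotopy mapping spaces $\underline{\Ho \widehat{\Theta_2}}(\delta_{1;3}, -)$, cf. §\ref{homotopy mapping space and local objects}---is invariant under weak equivalences of Ara between Ara-fibrant objects. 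Since both $X$ and $N_h(\Ho(X))$ are $2$-quasi-categories, hence Ara-fibrant, a weak equivalence $\eta_X$ in Ara transports $2$-truncatedness from $N_h(\Ho(X))$ to $X$.

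\textbf{The direction $(\Rightarrow)$ and the main obstacle.} Assume $X$ is $2$-truncated, so by §\ref{Definition of 2-truncated 2-quasi-categories} each $\Hom_X(x,y)$ is $1$-truncated and the hom-map of $\eta_X$ is a weak categorical equivalence for every $x,y \in X_0$. To conclude that $\eta_X$ is a weak equivalence of Ara, one needs a local-to-global criterion: \emph{a morphism between $2$-quasi-categories that is the identity on $0$-cells and induces a weak categorical equivalence on each hom-quasi-category is a weak equivalence in Ara's model structure}. Establishing this criterion is the principal technical hurdle. A natural route is a cellular induction exploiting the Quillen adjunction $\Sigma \dashv \Hom$ of Proposition \ref{Quillen adjunction suspension Hom}: cofibrations in Ara are monomorphisms, and their analysis cell-by-cell through boundary inclusions $\delta_{n;\mathbf{q}}$ can be reduced---via $\Sigma$ applied to simplicial boundary inclusions---to the known homotopical behavior of the Joyal model structure. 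Once the criterion is available, applying it to $\eta_X$ (which satisfies its hypotheses under our assumption) immediately gives that $\eta_X$ is a weak equivalence of Ara, completing the proof.
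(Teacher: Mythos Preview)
The paper does not prove this theorem itself; it cites \cite[Theorem 7.28]{campbell2020homotopy}. So the relevant comparison is with Campbell's argument.

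Your overall strategy matches Campbell's: reduce to the hom-level by identifying the map induced by $\eta_X$ on hom-quasi-categories with the unit $\Hom_X(x,y) \to N(\ho(\Hom_X(x,y)))$, and then appeal to a local-to-global criterion for Ara weak equivalences between 2-quasi-categories. Your $(\Leftarrow)$ direction is fine as written. The identification of $\Hom_{N_h(\cc)}(x,y)$ with $N(\cc(x,y))$ is also correct (and is used in the paper, see the proof of Theorem \ref{Quillen equivalence 2-truncated (infty, 2)-groupoids and 2-groupoids}(a)).

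The genuine gap is in $(\Rightarrow)$: you explicitly leave the local-to-global criterion unproved, and the route you sketch (``cellular induction'' via $\Sigma \dashv \Hom$ and boundary inclusions $\delta_{n;\mathbf{q}}$) is not how this is established and is unlikely to go through directly. The difficulty is that Ara's weak equivalences are defined only indirectly (via a localizer, or equivalently via Rezk's complete Segal $\Theta_2$-spaces), so a naive induction over cells gives no handle on them. Campbell obtains the needed criterion---that a map between 2-quasi-categories is an Ara weak equivalence iff it is essentially surjective on objects and a Joyal equivalence on each hom-quasi-category---as a consequence of his comparison of Ara's model structure with Rezk's model structure on $\Theta_2$-spaces and the known Dwyer--Kan-style characterisation of equivalences there (see \cite[\S6, esp.\ Corollary 6.29]{campbell2020homotopy}). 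Without importing that machinery or an equivalent substitute, your $(\Rightarrow)$ argument is incomplete: you have correctly isolated the missing lemma, but your proposed proof of it is not one.
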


\begin{proof}
See \cite[Theorem 7.28]{campbell2020homotopy}.
\end{proof}

\begin{theorem}\label{Quillen equivalence 2-truncated 2-quasi-categories and 2-categories}
The adjunction
$$\tau_b: \widehat{\Theta_2} \rightleftarrows \Bicat_s: N_h$$
is a Quillen equivalence between the category of 2-cellular sets with the model structure for 2-truncated 2-quasi-categories and $\Bicat_s$ with Lacks's model structure for bicategories.
\end{theorem}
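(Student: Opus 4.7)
The plan is to apply Theorem \ref{criterion homotopy refection Quillen equivalence} to the homotopy reflection $(\tau_b, N_h)$ of Theorem \ref{Quillen adjunction 2-quasi-categories and bicategories}, replacing Ara's model structure on $\widehat{\Theta_2}$ by the model structure for 2-truncated 2-quasi-categories of \S\ref{model structure for 2-truncated 2-quasi-categories}; no localization is needed on the right-hand side since every bicategory is fibrant in Lack's model structure.

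For part (1) of the criterion I must check that $N_h(\cc)$ is 2-truncated for every $\cc \in \Bicat_s$. It is a 2-quasi-category by \S\ref{Homotopy coherent nerve as a fibrant replacement}, and by Theorem \ref{2-truncated 2-quasi-categories are nerves of bicategories} 2-truncation amounts to the unit $N_h(\cc) \to N_h(\Ho(N_h(\cc)))$ of $\Ho \dashv N_h$ being a weak equivalence of Ara. But $N_h : \Bicat \to \widehat{\Theta_2}$ is fully faithful (\S\ref{Homotopy coherent nerve as a fibrant replacement}), and so is its corestriction to $\twoqcat$; hence the counit $\Ho N_h \to \id_{\Bicat}$ is invertible, and the triangle identities force the unit at $N_h(\cc)$ to be an isomorphism as well.

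For part (2), given a 2-truncated 2-quasi-category $X$ I take $Y = \tau_b(X) \cong Q\Ho(X) \in \Bicat_s$ (using \S\ref{Homotopy bicategory of a 2-quasi-category}) and show that the unit $\eta_X : X \to N_h(\tau_b X)$ is a weak equivalence of Ara. Factor $\eta_X$ as
\[
X \longrightarrow N_h(\Ho X) \xrightarrow{\,N_h(\nu)\,} N_h(Q\Ho X) = N_h(\tau_b X),
\]
where $\nu : \Ho X \to Q\Ho X$ is the unit of $Q \dashv \iota$. The first arrow is a weak equivalence by Theorem \ref{2-truncated 2-quasi-categories are nerves of bicategories}. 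The map $\nu$ is a biequivalence: by \S\ref{Model category structures on 2-categories and bicategories} the unit of $\st \dashv \iota\iota'$ at $\Ho X$ is a biequivalence, and it factors through $\nu$ followed by $\iota$ applied to the unit of $\lambda \dashv \iota'$ at $Q\Ho X$ (itself a biequivalence, by the same reference applied to an object of the form $\iota\b$), so two-out-of-three gives the claim. By two-out-of-three again it will suffice to verify that $N_h(\nu)$ is a weak equivalence of Ara.

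The main obstacle is this last point: Theorem \ref{Quillen adjunction 2-quasi-categories and bicategories} guarantees preservation of biequivalences only for maps between objects of $\Bicat_s$, whereas $\nu$ is a map in $\Bicat$ whose source is not strict. The plan is to close the gap by a naturality/compatibility argument: for any $\cc \in \Bicat$, $N_h|_{\Bicat}(\iota Q\cc)$ agrees with $N_h|_{\Bicat_s}(Q\cc)$, and $Q$ sends biequivalences of $\Bicat$ to biequivalences of $\Bicat_s$ (again by two-out-of-three against the biequivalent units of $Q \dashv \iota$); running this square for $\nu$ and invoking the strict case of Theorem \ref{Quillen adjunction 2-quasi-categories and bicategories} then finishes the argument.
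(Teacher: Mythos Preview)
Your overall strategy matches the paper's: apply the criterion of Theorem~\ref{criterion homotopy refection Quillen equivalence} to the homotopy reflection of Theorem~\ref{Quillen adjunction 2-quasi-categories and bicategories}. Your verification of part~(1) is more explicit than the paper's (which leaves it implicit), and your argument via full faithfulness of $N_h$ and the triangle identities is clean and correct.

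For part~(2), however, you have made life unnecessarily hard. The criterion only asks for \emph{some} fibrant $Y$ in $\Bicat_s$ and \emph{some} weak equivalence $X \to N_h(Y)$ in Ara's model structure; it does not require $Y = \tau_b(X)$ or that the map be the unit of $(\tau_b, N_h)$. The paper simply takes $Y = \Ho(X)$: since $\Bicat$ and $\Bicat_s$ have the same objects, $\Ho(X)$ is an object of $\Bicat_s$ (fibrant, as all are), and $N_h(\Ho(X))$ is the same $\Theta_2$-set whether $N_h$ is regarded as defined on $\Bicat$ or on $\Bicat_s$. Theorem~\ref{2-truncated 2-quasi-categories are nerves of bicategories} then gives the required weak equivalence $X \to N_h(\Ho(X))$ directly, and the proof is finished. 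You already invoke this very map as the first factor of your composite, so you had the argument in hand before introducing the second factor.

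Your self-imposed ``obstacle'' --- showing that $N_h$ on $\Bicat$ (not $\Bicat_s$) takes the biequivalence $\nu$ to an Ara weak equivalence --- is therefore entirely avoidable. Your plan to close it is also not quite complete as written: you appeal to \S\ref{Model category structures on 2-categories and bicategories} for the unit of $\lambda \dashv \iota'$ being a biequivalence, but that paragraph only records this for the composite $\st \dashv \iota\iota'$; extracting the separate statements for $Q$ and $\lambda$ needs an extra step, and the final ``naturality square'' argument remains a sketch. None of this is needed once you choose $Y = \Ho(X)$.
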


\begin{proof}
Given Theorem \ref{Quillen adjunction 2-quasi-categories and bicategories}, it is sufficient by Theorem \ref{criterion homotopy refection Quillen equivalence} to show that every 2-truncated 2-quasi-category is weakly equivalent (in Ara's model structure) to the homotopy coherent nerve of a bicategory. This follows from Theorem \ref{2-truncated 2-quasi-categories are nerves of bicategories}.
\end{proof}

\section{2-truncated 2-quasi-groupoids vs 2-groupoids} \label{section 2-truncated 2-quasi-groupoids vs 2-groupoids}

In this section, we construct a Quillen equivalence between a model structure for 2-truncated 2-quasi-groupoids on $\widehat{\Theta_2}$ and the model structure for 2-groupoids on the category $\2Gpd$ of 2-groupoids and (strict) 2-functors, described in \cite{moerdijk1993algebraic}.

\begin{parag} \label{left adjoint of inclusion of groupoids}
The inclusion functor $\2Gpd \to \twocat$ has a left adjoint $F: \twocat \to \2Gpd$, which freely adds inverses to every morphism and 2-morphism.
\end{parag}

\begin{proposition} \label{adjunction 2-categories and 2-groupoids}
The adjunction 
$$F: \twocat \rightleftarrows \2Gpd$$
is a Quillen adjunction between Lack's model structure for 2-categories and Moerdijk-Svensson's model structure for 2-groupoids. The model structure for 2-groupoids is the model structure right-transferred from the model structure for 2-categories via this adjunction. Moreover, the adjunction is a homotopy reflection.
\end{proposition}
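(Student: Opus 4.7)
The plan is to reduce the statement to two general facts: that Moerdijk-Svensson's model structure on $\2Gpd$ is right-transferred from Lack's model structure along the fully faithful inclusion $i: \2Gpd \to \twocat$, and that right-transferred structures along fully faithful right adjoints yield homotopy reflections under mild hypotheses. For the first reduction, I would inspect the definitions of weak equivalence and fibration from \cite{moerdijk1993algebraic} and \cite{lack2004quillen} and verify that, on the subcategory of 2-groupoids, they coincide with the classes reflected by $i$. For weak equivalences this is essentially immediate: both amount to essential surjectivity on objects plus local equivalence of hom-groupoids. For fibrations, one matches the explicit lifting conditions of Moerdijk-Svensson against those of Lack restricted to 2-groupoids.

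Once this identification is made, the Quillen adjunction $F \dashv i$ is automatic from the definition of the right transfer, since $i$ then preserves fibrations and acyclic fibrations by construction. For the homotopy reflection, I would exploit that $i$ is fully faithful, so the counit $\varepsilon: F i \Rightarrow \id_{\2Gpd}$ is a natural isomorphism. For a 2-groupoid $X$ (fibrant in both model structures) and a cofibrant replacement $q: Q \to i(X)$ in $\twocat$, the derived counit $\L F \circ \R i(X) \to X$ is represented by the composite $F(Q) \xrightarrow{F(q)} F i(X) \xrightarrow{\varepsilon_X} X$, in which the second arrow is an isomorphism. It thus suffices to verify that $F(q)$ is a biequivalence of 2-groupoids.

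The hard part will be precisely this last verification: that $F$ sends the biequivalence $q: Q \to i(X)$ to a biequivalence. One cannot directly invoke Ken Brown's lemma, since $i(X)$ need not be cofibrant in $\twocat$. Instead, I would argue from the explicit description of $F$ as freely adjoining inverses to all 1- and 2-morphisms: essential surjectivity on objects and local equivalence of hom-categories are preserved under this operation, because any quasi-inverse found in $Q$ persists in $F(Q)$, and the universal property of $F$ allows one to transfer quasi-inverses from $F i(X) \cong X$ back to $F(Q)$. An alternative strategy is to replace the generic cofibrant replacement $q$ by a specific one built from the generating cofibrations of Lack's model structure, for which $F(q)$ admits a direct combinatorial analysis.
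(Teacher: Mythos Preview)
Your plan for the first two claims---checking directly that Moerdijk--Svensson's weak equivalences and fibrations coincide with those reflected along the inclusion $i:\2Gpd\to\twocat$, and deducing the Quillen adjunction from the definition of right transfer---is exactly what the paper does.

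For the homotopy reflection, the paper takes a much shorter route: it simply cites Lack \cite[Theorem 8.4]{lack2002quillen}, which already establishes that the derived counit is invertible. You instead unwind the derived counit and reduce to showing that $F$ sends a cofibrant replacement $q:Q\to i(X)$ to a biequivalence; this reduction is correct and your observation that Ken Brown's lemma does not apply (since $i(X)$ need not be cofibrant) is on point. However, your proposed justification of this step is only a sketch. Essential surjectivity of $F(q)$ does follow easily as you indicate, but the local statement---that $F(Q)(a,a')\to F(i(X))(q(a),q(a'))$ is an equivalence of groupoids---does not follow from the remark that ``quasi-inverses persist''. The hom-groupoids of $F(Q)$ involve formal composites of $1$-cells of $Q$ and their formal inverses, and controlling these requires a genuine analysis (this is essentially the content of Lack's Theorems 8.4--8.5). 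Your alternative idea of using an explicit cofibrant replacement is viable in principle but would again amount to reproving Lack's result.

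In short: the structure of your argument is sound and agrees with the paper up to the last step, but where the paper defers to the literature you are proposing to redo that work, and the part you flag as ``hard'' is genuinely hard and not settled by what you wrote.
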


\begin{proof}
The fact that the model structure for 2-groupoids is the model structure right-transferred from the model structure for 2-categories can be checked directly from the definitions of weak equivalences and fibrations in both model structures. This implies that the adjunction is Quillen, since the inclusion $\2Gpd \to \twocat$ obviously preserves fibrations and weak equivalence. The adjunction is a homotopy reflection by a result of Lack \cite[Theorem 8.4]{lack2002quillen}, which states that the counit of the derived adjunction is invertible.
\end{proof}

\begin{parag} \label{recollection adjunctions}
Let us recollect, for the convenience of the reader, the adjunctions considered so far.

\[\begin{tikzcd}[ampersand replacement=\&]
	{\mathbf{Bicat}} \&\& {\mathbf{Bicat}_s} \&\& {\mathbf{2\text{-}Cat}} \&\& {\mathbf{2\text{-}Gpd}} \\
	\\
	\\
	{\mathbf{2\text{-}QCat}} \&\& {\widehat{\Theta_2}}
	\arrow[""{name=0, anchor=center, inner sep=0}, "{N_h}", shift left=2, from=1-3, to=4-3]
	\arrow[""{name=1, anchor=center, inner sep=0}, "{\tau_b}", shift left=2, from=4-3, to=1-3]
	\arrow[""{name=2, anchor=center, inner sep=0}, shift left=2, from=1-3, to=1-1]
	\arrow[""{name=3, anchor=center, inner sep=0}, shift left=2, from=1-5, to=1-3]
	\arrow[""{name=4, anchor=center, inner sep=0}, "I", shift left=2, from=1-7, to=1-5]
	\arrow[""{name=5, anchor=center, inner sep=0}, "Q", shift left=2, from=1-1, to=1-3]
	\arrow[""{name=6, anchor=center, inner sep=0}, "\lambda", shift left=2, from=1-3, to=1-5]
	\arrow[""{name=7, anchor=center, inner sep=0}, "F", shift left=2, from=1-5, to=1-7]
	\arrow[""{name=8, anchor=center, inner sep=0}, "{N_h}", shift left=2, from=1-1, to=4-1]
	\arrow[""{name=9, anchor=center, inner sep=0}, "{\text{Ho}}", shift left=2, from=4-1, to=1-1]
	\arrow[from=4-1, to=4-3]
	\arrow["\dashv"{anchor=center}, draw=none, from=1, to=0]
	\arrow["\dashv"{anchor=center, rotate=-90}, draw=none, from=5, to=2]
	\arrow["\dashv"{anchor=center, rotate=-90}, draw=none, from=6, to=3]
	\arrow["\dashv"{anchor=center, rotate=-90}, draw=none, from=7, to=4]
	\arrow["\dashv"{anchor=center}, draw=none, from=9, to=8]
\end{tikzcd}\]
\end{parag}

In the diagram above, the non-labelled arrows are inclusions, the first two horizontal adjunctions are those of §\ref{bicategories and normal pseudofunctors} and the third horizontal adjunction is the one of Proposition \ref{adjunction 2-categories and 2-groupoids}. The left-side (resp. right-side) vertical adjunction is presented in §\ref{Homotopy bicategory of a 2-quasi-category} (resp. §\ref{definition of tau_b}). 

\begin{parag} \label{main Quillen adjunction}
We consider the Quillen adjunction
$$\tau: \widehat{\Theta_2} \rightleftarrows \2Gpd: N_h$$
obtained by composing the right-side vertical adjunction with the second and the third horizontal adjunctions of the diagram above. The functor $\tau: \widehat{\Theta_2} \to \2Gpd $ is explicitly defined as the composite $\tau:= F \lambda \tau_b$.
\end{parag}

Our goal is to show that this adjunction is a Quillen equivalence between a certain model structure for 2-truncated 2-quasi-groupoids on $\widehat{\Theta_2}$, described in §\ref{model structure for 2-truncated (infty, 2)-groupoids}, and Moerdijk-Svensson's model structure on $\2Gpd$.

\begin{proposition} \label{2-truncated (infty, 2)-groupoids as local objects}
A 2-quasi-category $X$ is a 2-truncated 2-quasi-groupoid if and only if it is local with respect to the morphisms
$$\delta_{1;3}: \partial \Theta_2[1;3] \to \Theta_2[1;3], [\id; \sigma^0]: \Theta_2[1;1] \to \Theta_2[1;0], [\sigma^0]: \Theta_2[1;0] \to \Theta_2[0]$$
in Ara's model structure for 2-quasi-categories
\end{proposition}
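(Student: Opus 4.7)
The proof is essentially a bookkeeping exercise combining three earlier characterisations of local objects. The plan is to observe that by definition a 2-truncated 2-quasi-groupoid is simultaneously a 2-quasi-groupoid and a 2-truncated 2-quasi-category, and that locality with respect to a set of morphisms is equivalent to locality with respect to each morphism in the set taken separately. Hence the statement will follow by conjoining the three previous characterisations.

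More precisely, I would argue as follows. By Proposition \ref{(infty, 2)-groupoids as local objects}, a 2-quasi-category $X$ is a 2-quasi-groupoid if and only if it is local in Ara's model structure with respect to both $[\id; \sigma^0]: \Theta_2[1;1] \to \Theta_2[1;0]$ and $[\sigma^0]: \Theta_2[1;0] \to \Theta_2[0]$. By Proposition \ref{2-truncated 2-quasi-categories as local objects}, a 2-quasi-category $X$ is 2-truncated if and only if it is local with respect to $\delta_{1;3}: \partial \Theta_2[1;3] \to \Theta_2[1;3]$. By Definition \ref{(infty, 2)-groupoid} (and the definition of 2-truncated 2-quasi-groupoid as a 2-quasi-groupoid which is 2-truncated as a 2-quasi-category), $X$ is a 2-truncated 2-quasi-groupoid if and only if $X$ is simultaneously a 2-quasi-groupoid and a 2-truncated 2-quasi-category.

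Therefore $X$ is a 2-truncated 2-quasi-groupoid if and only if it is local with respect to each of $\delta_{1;3}$, $[\id; \sigma^0]$, and $[\sigma^0]$, which is precisely the claim. There is no genuine obstacle here: the only point worth flagging is that the notion of being local with respect to a class $S$ of morphisms, as defined in §\ref{homotopy mapping space and local objects}, is manifestly the conjunction over $f \in S$ of the condition of being local with respect to the single morphism $f$, so splitting and recombining the three local conditions is immediate.
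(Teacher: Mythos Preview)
Your argument is correct and matches the paper's own proof, which simply says the result follows from Propositions \ref{(infty, 2)-groupoids as local objects} and \ref{2-truncated 2-quasi-categories as local objects}. You have merely spelled out in detail what the paper leaves implicit, including the trivial observation that $S$-locality is the conjunction of locality with respect to each $f \in S$.
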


\begin{proof}
It follows from Propositions \ref{(infty, 2)-groupoids as local objects} and \ref{2-truncated 2-quasi-categories as local objects}.
\end{proof}

\begin{parag} \label{model structure for 2-truncated (infty, 2)-groupoids}
Theorem \ref{smith existence theorem} gives a model structure on $\widehat{\Theta_2}$ whose cofibrations are monomorphisms, and whose fibrant objects are 2-truncated 2-quasi-groupoids. This \textit{model structure for 2-truncated 2-quasi-groupoids} is the localisation of Ara's model structure for 2-quasi-categories with respect to the morphisms of Proposition \ref{2-truncated (infty, 2)-groupoids as local objects}.
\end{parag}

\begin{theorem} \label{Quillen equivalence 2-truncated (infty, 2)-groupoids and 2-groupoids}
The adjunction
$$\tau: \widehat{\Theta_2} \rightleftarrows \2Gpd: N_h$$
of §\ref{main Quillen adjunction} is a Quillen equivalence between the model structure for 2-truncated 2-quasi-groupoids and Moerdijk-Svensson's model structure for 2-groupoids.
\end{theorem}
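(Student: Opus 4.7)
The plan is to apply Theorem~\ref{criterion homotopy refection Quillen equivalence}(2) to the adjunction $\tau \dashv N_h$ between Ara's model structure on $\widehat{\Theta_2}$ and Moerdijk-Svensson's model structure on $\2Gpd$, with the Bousfield localisation being the model structure for 2-truncated 2-quasi-groupoids of §\ref{model structure for 2-truncated (infty, 2)-groupoids}. I would first observe that $\tau \dashv N_h$ is already a homotopy reflection between the unlocalised model structures: it decomposes as the composite of $\tau_b \dashv N_h$ (a homotopy reflection by Theorem~\ref{Quillen adjunction 2-quasi-categories and bicategories}), $\lambda \dashv$ inclusion (a Quillen equivalence by §\ref{Model category structures on 2-categories and bicategories}), and $F \dashv I$ (a homotopy reflection by Proposition~\ref{adjunction 2-categories and 2-groupoids}); composites of homotopy reflections are again homotopy reflections, since the derived right adjoint of a composite is the composite of derived right adjoints, which is fully faithful as a composite of fully faithful functors.

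Next, using part (1) of Theorem~\ref{criterion homotopy refection Quillen equivalence}, I would verify that $N_h(G)$ is a 2-truncated 2-quasi-groupoid for every 2-groupoid $G$, which yields the Quillen adjunction after localisation. For 2-truncation, the full faithfulness of $N_h \colon \Bicat \to \widehat{\Theta_2}$ gives $\Ho(N_h(G)) \cong G$, so the unit $N_h(G) \to N_h\Ho(N_h(G))$ is an isomorphism, hence a weak equivalence, and Theorem~\ref{2-truncated 2-quasi-categories are nerves of bicategories} applies. For the 2-quasi-groupoid property, I invoke Proposition~\ref{characterisation of (infty, 2)-groupoids as bigroupoids}: by §\ref{Homotopy bicategory of a 2-quasi-category}, $\lambda\tau_b N_h(G) \cong \st(G)$, and §\ref{Model category structures on 2-categories and bicategories} ensures the counit $\st(G) \to G$ is a biequivalence, so $\st(G)$ inherits the bigroupoid property from $G$.

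The bulk of the work lies in verifying part (2) of the criterion: for every 2-truncated 2-quasi-groupoid $X$, I must exhibit a 2-groupoid $G$ and an Ara weak equivalence $X \to N_h(G)$. I would take $G := \tau(X) = F\lambda\tau_b(X)$ and build the required map as the composite
\[
X \xrightarrow{(a)} N_h\Ho(X) \xrightarrow{(b)} N_h(\st\Ho(X)) \xrightarrow{(c)} N_h(IF\st\Ho(X)) \cong N_h(G),
\]
obtained by applying $N_h$ to the string of units $\Ho(X) \to \st\Ho(X) \to IF\st\Ho(X)$ in $\Bicat$. Step $(a)$ is a weak equivalence by Theorem~\ref{2-truncated 2-quasi-categories are nerves of bicategories} since $X$ is 2-truncated. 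Step $(b)$ comes from the unit of $\st \dashv I$, a biequivalence by §\ref{Model category structures on 2-categories and bicategories}, and $N_h$ sends biequivalences between bicategories to Ara weak equivalences.

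The main obstacle is step $(c)$, which requires the following key lemma: \emph{for every 2-category $\cc$ that is a bigroupoid, the unit $\cc \to IF(\cc)$ of the adjunction $F \dashv I$ is a biequivalence.} This lemma applies in our setting because $\st\Ho(X) \cong \lambda\tau_b(X)$ is a bigroupoid by Proposition~\ref{characterisation of (infty, 2)-groupoids as bigroupoids}. To prove the lemma, I would argue by direct inspection of the free-inverse construction: the unit is the identity on objects, and because every 1-morphism of $\cc$ already admits a pseudo-inverse and every 2-morphism is already invertible, each formal inverse introduced by $F$ is 2-isomorphic in $IF(\cc)$ to a pre-existing morphism of $\cc$, so the functor $\cc(x,y) \to IF(\cc)(x,y)$ is an equivalence of categories for every pair $(x,y)$. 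With this lemma in hand, the criterion applies and yields the claimed Quillen equivalence.
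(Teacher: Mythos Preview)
Your proposal follows essentially the same strategy as the paper: use Theorem~\ref{criterion homotopy refection Quillen equivalence}(2), verify that $N_h$ of a 2-groupoid is a 2-truncated 2-quasi-groupoid, and then exhibit the required weak equivalence via (a variant of) the unit map, using the fact that for a 2-category $\cc$ which is a bigroupoid the unit $\cc \to IF(\cc)$ is a biequivalence. Two differences are worth noting.

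First, you take $\m$ to be Ara's model structure, whereas the paper takes $\m$ to be the model structure for 2-truncated 2-quasi-categories. This forces a different decomposition of the map $X \to N_h(\tau X)$: the paper bundles your steps $(a)$ and $(b)$ into a single step, namely the unit at $X$ of the composite Quillen equivalence $\lambda\tau_b \dashv N_h$ of Theorem~\ref{Quillen equivalence 2-truncated 2-quasi-categories and 2-categories} and §\ref{Model category structures on 2-categories and bicategories}, and argues it is a weak equivalence because it is the unit of a Quillen equivalence at a cofibrant object with fibrant image. This neatly avoids your step $(b)$.

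Your step $(b)$ is where there is a gap. You assert that ``$N_h$ sends biequivalences between bicategories to Ara weak equivalences'', but this concerns $N_h \colon \Bicat \to \widehat{\Theta_2}$ applied to a \emph{normal pseudofunctor} (the unit $\Ho(X) \to \st\Ho(X)$ is not in general strict). The right-Quillen argument only gives this for $N_h \colon \Bicat_s \to \widehat{\Theta_2}$, i.e.\ for \emph{strict} biequivalences. The claim is true, but it requires either a characterisation of Ara weak equivalences between 2-quasi-categories (essential surjectivity together with Joyal equivalences on hom-quasi-categories) or an additional argument; neither is supplied. The paper's choice of base model structure sidesteps this entirely. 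Finally, for the key lemma in step $(c)$ the paper simply cites Lack \cite[Theorem~8.5]{lack2002quillen} rather than sketching an argument.
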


\begin{proof}
We consider the model structure for 2-truncated quasi-categories on $\widehat{\Theta_2}$. With this model structure, the adjunction (1) $\tau: \widehat{\Theta_2} \rightleftarrows \2Gpd: N_h$ is Quillen and a homotopy reflection, as a composition of the Quillen equivalence (2) $\tau_b: \widehat{\Theta_2} \rightleftarrows \Bicat_s: N_h$ of Theorem \ref{Quillen equivalence 2-truncated 2-quasi-categories and 2-categories}, the Quillen equivalence (3) $ \lambda: \Bicat_s \rightleftarrows \twocat$ of §\ref{Model category structures on 2-categories and bicategories} and the homotopy reflection (4) $F: \twocat \rightleftarrows \2Gpd$ of Proposition \ref{adjunction 2-categories and 2-groupoids}.

By Propositions \ref{proposition succesive localisations} and \ref{model structure for 2-truncated 2-quasi-categories} and §\ref{model structure for 2-truncated (infty, 2)-groupoids}, the model structure for 2-truncated 2-quasi-groupoids is a localisation of the former one. We can then use Theorem \ref{criterion homotopy refection Quillen equivalence}.(2) to show that the adjunction (1) is a Quillen equivalence after localisation. The conditions to be checked amount to the assertions (a) and (b) below.

(a) The adjunction (1) is a Quillen adjunction, when considering the model structure for 2-truncated quasi-groupoids on $\widehat{\Theta_2}$. 

By Theorem \ref{criterion homotopy refection Quillen equivalence}.(1), it is sufficient to show that if $\g$ is a 2-groupoid, then $N_h(\g)$ is a 2-truncated 2-quasi-groupoid. For every objects $x, y$ of $\g$, we have that $\Hom_{N_h(\g)}(x,y) \cong N(\g(x,y))$ (here, the hom-simplicial set is the one defined in §\ref{hom quasi-category}). Indeed, an $n$-simplex of $\Hom_{N_h(\g)}(x,y)$ is a normal pseudo-functor $[1;n] \to \g$ sending the objects 0 and 1 of $[1;n]$ to $x$ and $y$ -- such a pseudo-functor is necessarily a 2-functor since $[1;n]$ has no composable (non-identity) 1-morphisms -- or, equivalently, the data of $n$ composable 2-morphisms with 2-source $x$ and 2-target $y$, which is an $n$-simplex of $N(\g(x,y))$. But $\g(x,y)$ is a groupoid, so $N(\g(x,y))$ is 1-truncated and a Kan complex, and hence $N_h(\g)$ is 2-truncated and locally Kan. Since every 1-morphism of $\g$ is invertible, it follows from the definitions that every morphism (1-simplex) of $i^*(N_h(\g))$ is invertible, and so $i^*(N_h(\g))$ is a Kan complex.

(b) For every 2-truncated 2-quasi-groupoid $X$, there exists a 2-groupoid $Y$ and a weak equivalence $X \to N_h(Y)$ in the model structure for 2-truncated 2-quasi-categories.

Let $Y = \tau X$ and consider the unit morphism $X \to N_h (\tau X)$ of the adjunction (1). This morphism can be written as a composite
$$X \xrightarrow[]{\eta_X} N_h(\lambda\tau_b X) \xrightarrow[]{N_h(\eta'_{\lambda \tau_b X})} N_h(\tau X)$$
where the first morphism is the component at $X$ of the unit of the Quillen equivalence obtained by composing the Quillen equivalences (2) and (3), and the second one is the image by $N_h$ of the component at $\lambda \tau_b X$ of the unit of the Quillen adjunction (4). The first morphism is a weak equivalence, since it is the unit of a Quillen equivalence at a cofibrant object $X$, and $\lambda \tau_b X$ is fibrant (as every 2-category). By Proposition \ref{characterisation of (infty, 2)-groupoids as bigroupoids}, the 2-category $\lambda \tau_b X$ is a bigroupoid. In the proof of \cite[Theorem 8.5]{lack2002quillen}, it is shown that if a 2-category $\a$ is a bigroupoid, the 2-functor $\a \to F \a$ given by the unit of the adjunction (4) is a biequivalence. Thus, the second morphism is also a weak equivalence, as the image by a right Quillen functor of a weak equivalence between fibrant objects. We conclude that $X \to N_h (\tau X)$ is a weak equivalence.
\end{proof}

\section{Homotopy 2-types}

In this section, we recall a result of \cite{moerdijk1993algebraic}, which will allow the comparison of 2-truncated 2-quasi-groupoids and homotopy 2-types, by means of a zigzag of Quillen equivalences.

\begin{parag} \label{review of homotopy types}
Let $n \geq 0$. A \textit{(homotopy) $n$-type} is a Kan complex $X$ such that, for every $x \in X_0$, the $m$-th homotopy group $\pi_m(X, x)$ is trivial for every $m > n$. Homotopy $n$-types can be characterised as local objects in the Kan-Quillen model structure on simplicial sets. Indeed, a Kan complex $X$ is an $n$-type if and only if it is local with respect to the boundary inclusion $\partial \Delta[n+2] \to \Delta[n+2]$ in this model structure (see for example \cite[Corollary 3.25]{campbell2020truncated}). Therefore, by Theorem \ref{smith existence theorem}, the Bousfield localisation of the Kan-Quillen model structure with respect to $\delta_n: \partial \Delta[n+2] \to \Delta[n+2]$ produces a model structure whose fibrant objects are homotopy $n$-types. For $n=2$, we obtain a \textit{model structure for homotopy 2-types}.
\end{parag}

\begin{theorem} [Moerdijk-Svensson] \label{Quillen equivalence 2-types and 2-groupoids}
There is a Quillen equivalence
$$W: \widehat{\Delta} \rightleftarrows \2Gpd: N_S$$
between the model structure for homotopy 2-types and Moerdijk-Svensson's model structure for 2-groupoids.
\end{theorem}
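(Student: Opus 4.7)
The strategy is to recognize this as the classical algebraic-model theorem for homotopy 2-types of Moerdijk and Svensson \cite{moerdijk1993algebraic}. The right adjoint $N_S: \2Gpd \to \widehat{\Delta}$ is a Duskin-style nerve: its $n$-simplices are the normal lax (equivalently, pseudo-) functors from the free 2-category on $[n]$ into the given 2-groupoid. The left adjoint $W$ is then obtained by left Kan extension of this construction along the Yoneda embedding. Concretely, $W(X)$ plays the role of a ``fundamental 2-groupoid'' of $X$, with objects the vertices of $X$, 1-morphisms freely generated by edges, and 2-morphisms generated by 2-simplices modulo relations imposed by 3-simplices.

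The first step is to check that $(W, N_S)$ is a Quillen adjunction between the Kan-Quillen model structure on $\widehat{\Delta}$ and Moerdijk-Svensson's model structure on $\2Gpd$; this is a direct verification, using the explicit descriptions of fibrations and trivial fibrations on both sides, that $N_S$ preserves them. Next one verifies that for every 2-groupoid $\g$, the Kan complex $N_S(\g)$ is in fact a homotopy 2-type: since $\g$ has no data above dimension 2, one computes directly that $\pi_k(N_S(\g), x) = 0$ for $k \geq 3$. By Theorem \ref{criterion homotopy refection Quillen equivalence}(1), the adjunction then descends to a Quillen adjunction with the model structure for homotopy 2-types in place of Kan-Quillen.

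To upgrade this to a Quillen equivalence, since every simplicial set is cofibrant and every 2-groupoid is fibrant, it suffices to show that the derived unit and counit are weak equivalences. For the counit $W N_S(\g) \to \g$, one matches the generators-and-relations description of $W N_S(\g)$ directly against $\g$ and concludes that it is a biequivalence. For the unit $X \to N_S W(X)$, since the target is already a homotopy 2-type, it is enough to check that the map induces isomorphisms on $\pi_0$, $\pi_1$, and $\pi_2$ of Kan replacements.

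The main obstacle will be this last verification: showing that the fundamental 2-groupoid $W(X)$ correctly captures the first three homotopy groups of an arbitrary simplicial set $X$. This is the heart of the algebraic-model theorem, and its proof goes through a careful analysis of 2- and 3-simplex data, classically mediated by the equivalence of 2-groupoids with crossed modules. Granted this, Theorem \ref{criterion homotopy refection Quillen equivalence}(2) then yields the desired Quillen equivalence.
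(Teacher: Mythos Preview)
Your proposal is essentially correct and follows the same overall route as the paper: both reduce the statement to the results of \cite{moerdijk1993algebraic}. The paper's proof is little more than a sequence of citations---it invokes \cite[Proposition~2.1]{moerdijk1993algebraic} to get the Quillen adjunction with Kan--Quillen, uses Theorem~\ref{criterion homotopy refection Quillen equivalence}(1) and \cite[Proposition~2.1(iii)]{moerdijk1993algebraic} to see that it remains Quillen after localising to 2-types, and then cites \cite[Corollary~2.6]{moerdijk1993algebraic} for the equivalence of homotopy categories---whereas you unpack the content of those citations and sketch the actual verifications (derived unit and counit). So your write-up is more informative but not a different proof.

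One caveat: your description of $N_S$ as a ``Duskin-style nerve'' (normal pseudofunctors out of $[n]$) is not the construction Moerdijk and Svensson actually use; their right adjoint is built from a $\overline{W}$-type classifying-space construction for the underlying simplicial groupoid. The two nerves are weakly equivalent on 2-groupoids, so your outline still works, but if you are attributing the theorem to \cite{moerdijk1993algebraic} you should either use their functor or note explicitly that you are substituting an equivalent one.
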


\begin{proof}
By \cite[Proposition 2.1.(ii),(iv)]{moerdijk1993algebraic}, the adjunction of \cite[Theorem 2.3]{moerdijk1993algebraic} is Quillen when considering the Kan-Quillen model structure on $\widehat{\Delta}$. It follows from Theorem \ref{criterion homotopy refection Quillen equivalence}.(1) and \cite[Proposition 2.1.(iii)]{moerdijk1993algebraic} that it remains Quillen after localisation to the model structure for 2-types. The induced adjunction between homotopy categories is an equivalence by \cite[Corollary 2.6]{moerdijk1993algebraic}, after noticing  that the homotopy category of 2-types is equivalent to the full subcategory of the homotopy category of spaces formed by homotopy 2-types.
\end{proof}





\begin{corollary} \label{zigzag of Quillen equivalences}
The homotopy category of $\widehat{\Theta_2}$ with the model structure for 2-truncated 2-quasi-groupoids is equivalent to the homotopy category of $\widehat{\Delta}$ with the model structure for homotopy 2-types.
\end{corollary}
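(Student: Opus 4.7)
The plan is to chain together the two Quillen equivalences already established to obtain an equivalence of homotopy categories, using only the standard fact that every Quillen equivalence induces an equivalence between the underlying homotopy categories via its derived functors.

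First, I would invoke Theorem \ref{Quillen equivalence 2-truncated (infty, 2)-groupoids and 2-groupoids}, which gives a Quillen equivalence $\tau: \widehat{\Theta_2} \rightleftarrows \2Gpd: N_h$ between $\widehat{\Theta_2}$ equipped with the model structure for 2-truncated 2-quasi-groupoids and $\2Gpd$ with Moerdijk-Svensson's model structure. Passing to homotopy categories, the derived adjunction $\L\tau \dashv \R N_h$ yields an equivalence
$$\Ho(\widehat{\Theta_2}) \simeq \Ho(\2Gpd).$$
Second, I would invoke Theorem \ref{Quillen equivalence 2-types and 2-groupoids}, which provides a Quillen equivalence $W: \widehat{\Delta} \rightleftarrows \2Gpd: N_S$ between $\widehat{\Delta}$ with the model structure for homotopy 2-types and $\2Gpd$ with Moerdijk-Svensson's model structure. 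Deriving, we obtain a second equivalence
$$\Ho(\widehat{\Delta}) \simeq \Ho(\2Gpd).$$

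The final step is to compose the first equivalence with a quasi-inverse of the second, producing the desired equivalence between $\Ho(\widehat{\Theta_2})$ (with the model structure for 2-truncated 2-quasi-groupoids) and $\Ho(\widehat{\Delta})$ (with the model structure for homotopy 2-types). There is no substantial obstacle: the corollary follows immediately from the two Quillen equivalences by recognising that both share $\Ho(\2Gpd)$ as a common middle term in a zigzag. The only conceptual point worth mentioning is that, although the comparison is not realised by a single Quillen adjunction between $\widehat{\Theta_2}$ and $\widehat{\Delta}$, the zigzag nevertheless yields a genuine equivalence at the level of homotopy categories.
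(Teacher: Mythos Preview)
Your proposal is correct and matches the paper's own proof: both simply combine the Quillen equivalences of Theorem \ref{Quillen equivalence 2-truncated (infty, 2)-groupoids and 2-groupoids} and Theorem \ref{Quillen equivalence 2-types and 2-groupoids} into a zigzag through $\Ho(\2Gpd)$ and pass to homotopy categories. There is nothing to add.
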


\begin{proof}
This follows from the Quillen equivalence of Theorem \ref{Quillen equivalence 2-truncated (infty, 2)-groupoids and 2-groupoids} and Moerdijk-Svensson's Quillen equivalence of Theorem \ref{Quillen equivalence 2-types and 2-groupoids}. Indeed, we have a zigzag of Quillen equivalences
\[\begin{tikzcd}
	{\widehat{\Theta_2}} && {\mathbf{2\text{-}Gpd}} && {\widehat{\Delta}}
	\arrow[""{name=0, anchor=center, inner sep=0}, "W"', shift right=2, from=1-5, to=1-3]
	\arrow[""{name=1, anchor=center, inner sep=0}, "{N_S}"', shift right=2, from=1-3, to=1-5]
	\arrow[""{name=2, anchor=center, inner sep=0}, "\tau", shift left=2, from=1-1, to=1-3]
	\arrow[""{name=3, anchor=center, inner sep=0}, "{N_h}", shift left=2, from=1-3, to=1-1]
	\arrow["\dashv"{anchor=center, rotate=-90}, draw=none, from=0, to=1]
	\arrow["\dashv"{anchor=center, rotate=-90}, draw=none, from=2, to=3]
\end{tikzcd}\]
\end{proof}

\bibliographystyle{alpha}
\bibliography{sample}

\end{document}